\newtheorem{theorem}{Theorem}[section]
\newtheorem{prop}[theorem]{Proposition}
\newtheorem{cor}[theorem]{Corollary}
\newcommand{\CC}{{\mathbb C}}
\newcommand{\RR}{{\mathbb R}}
\newcommand{\NN}{{\mathbb N}}
\newcommand{\LL}{\mathcal L}
\newcommand{\GG}{\mathcal G}
\newcommand{\CMF}{\mathcal{CMF}}
\newcommand{\BF}{\mathcal{BF}}
\newcommand{\SF}{\mathcal{SF}}
\newcommand{\CBF}{\mathcal{CBF}}
\newcommand{\e}{\varepsilon}
\newcommand{\De}{\Delta}
\newcommand{\g}{\gamma}
\newcommand{\ph}{\phi}
\newcommand{\Ga}{\Gamma}
\newcommand{\Om}{\Omega}
\newcommand{\be}{\beta}
\newcommand{\al}{\alpha}
\newcommand{\la}{\lambda}
\newcommand{\Si}{\Sigma}
\newcommand{\phh}{\varphi}
\begin{document}

\begin{frontmatter}

\title{Subordination approach to multi-term time-fractional diffusion-wave equations}

\author{Emilia Bazhlekova 
\fnref{label1}} \ead{e.bazhlekova@math.bas.bg}
\fntext[label1]{Corresponding author}

\author{Ivan Bazhlekov}
\address{Institute of Mathematics and Informatics, Bulgarian Academy of Sciences, Acad. G. Bonchev Str.,
Bl. 8, Sofia 1113, Bulgaria}
 \ead{i.bazhlekov@math.bas.bg}

\begin{abstract}
 This paper is concerned with the fractional evolution equation with a discrete distribution of Caputo time-derivatives such that the largest and the smallest orders, $\al$ and $\al_m$,  satisfy the conditions $1<\al\le 2$ and $\al-\al_m\le 1$.
First, based on a study of the related propagation function, the nonnegativity of the fundamental solutions to the spatially one-dimensional Cauchy and signaling problems is proven and propagation speed of a disturbance is discussed. Next, we study the equation with a general linear spatial differential operator defined in a Banach space and suppose it generates a cosine family. A subordination principle is established, which implies the existence of a unique solution and gives an integral representation of the solution operator in terms
of the corresponding cosine family and a probability density function. Explicit representation of the probability density function is derived. The subordination principle is applied for obtaining regularity results. The analytical findings are supported by numerical work.
  
\end{abstract}

\begin{keyword}   time-fractional diffusion-wave equation \sep propagation function  \sep Bernstein function \sep solution operator \sep cosine family



\end{keyword}

\end{frontmatter}

\section{Introduction}

A variety of generalized wave equations has been proposed to model wave propagation in complex media. One of them is the diffusion-wave equation with the Caputo fractional time derivative
\begin{equation}\label{DW}
 D_t^\al u(\mathbf{x},t)=\De_{\mathbf{x}} u (\mathbf{x},t), \ \ \al\in(1,2),
\end{equation}
which describes evolution processes intermediate between diffusion and wave propagation \cite{MainardiAML, MainardiFDW, M, A, Luchkopropspeed, LuchkoMainardi, sub2}. 
It is shown in \cite{MainardiFDW} that the spatially one-dimensional version of Eq.~(\ref{DW}) governs the propagation of mechanical diffusive waves in viscoelastic media exhibiting a power-law creep. Such waves are of relevance in acoustics, seismology, medical imaging, etc. For example, experimental evidence reveals that in a complex inhomogeneous conducting medium sound waves exhibit power-law attenuation (see, e.g., \cite{Duck} for applications to medical ultrasound).  In general, fractional derivatives in time reflect hereditary mechanisms of power-law type in diffusion or wave phenomena, see \cite{Luchkopropspeed} and the references cited there. 

In the attempt to find more adequate models, 
the single fractional time derivative in Eq.~(\ref{DW}) is often replaced by a discrete or continuous distribution of fractional time derivatives over the interval $(0,2]$, see e.g. \cite{A}, Chapter 6, and \cite{A1, A2, Trifce}. Concerning the spatial operator, besides the Laplacian, more general classes of operators have been also considered: the fractional Laplacian \cite{FractLap, FractLap1}, second order symmetric uniformly elliptic operators \cite{SaYa},  etc. To cover different spatial operators, a general unbounded linear operator on a Banach space of functions is taken in, e.g., \cite{sub2, A1, A2, B, KosticFCAA}. 

Assume $A$ is a general linear closed 
operator densely defined in a Banach space $X$ and 
 consider the fractional evolution equation 
\begin{equation}\label{st}
 D_t^{\al}u(\mathbf{x},t)=Au(\mathbf{x},t), \ \ t>0,\mathbf{x}\in\RR^n;\ \ \  \ u(\mathbf{x},0)=v(\mathbf{x})\in X, \ \ u_t(\mathbf{x},0)=0,\  \ 
\end{equation} 
where $\al\in(0,2]$ (the second initial condition is assumed only if $\al>1$). Problem (\ref{st}) is extensively studied, see e.g. \cite{B} for some basic definitions and properties. Denote by $S_\al(t)$ the solution operator corresponding to problem (\ref{st}). In the limiting case $\al=2$ the solution operator  $S_2(t)$ is the strongly continuous cosine function generated by the operator $A$
 (\cite{Arendt}, Section 3.14). 

The following subordination principle holds true \cite{B, Subordination0}: If problem (\ref{st}) is well posed for $\al=2$ then it is well posed for all $\al\in (0,2)$ and the solution operators of these two problems are related by the identity
\begin{equation}\label{subM}
S_\al(t)=t^{-{\al/2}}\int_0^\infty \Phi_{\al/2}(\tau t^{-\al/2})S_2(\tau)\, \mbox{d}\tau, \ \ t>0,
\end{equation}
where $\Phi_\be(z)$, $\be\in(0,1)$, is a function of the Wright type, also known as Mainardi function
\begin{equation}\label{Mainardi}
\Phi_\be(z)=\sum_{k=0}^\infty \frac{(-z)^k}{k!\Ga(-\be k+1-\be)},\ \ \be\in(0,1).
\end{equation}
It is a unilateral probability density function (p.d.f.) in the sense that
$$
\Phi_\be(t)\ge 0,\ t>0;\ \ \ \ \int_0^\infty \Phi_\be(t)\, \mbox{d}t=1.
$$
Let us note that function (\ref{Mainardi}) appears also in the fundamental solutions of the spatially one-dimensional Cauchy and signaling problems for equation (\ref{DW}), cf. \cite{MainardiAML, Luchkopropspeed, LuchkoMainardi}. 

By means of the subordination principle it is possible to construct new solutions from given ones 
as well as to study their regularity and asymptotic behavior.

Subordination principle in a general setting of abstract Volterra equations is introduced in \cite{Pruss}, Chapter~4. 
In the context of fractional evolution equations this principle has been applied for asymptotic analysis of fractional diffusion-wave equations \cite{sub1}, regularity and representation of solution of fractional diffusion equations in terms of integrated semigroups \cite{KeLiWa}, inverse problems \cite{sub0}, study of semilinear equations \cite{APLizama}, etc. Generalizations of this principle include distributed order diffusion equations \cite{K, Meer, ITSF, Mijena} and regularized resolvent families \cite{Abadias}. Let us note that the principle of subordination is closely related to the concept of subordination in stochastic processes  \cite{Feller}. For instance, the subordination results concerning fractional evolution equations in \cite{sub2, Meer, Mijena} are presented in such a context.
 
In this paper we are concerned with the following multi-term generalization of the fractional evolution equation (\ref{st})
\begin{equation}\label{A}
cD_t^{\al}u(\mathbf{x},t)+\sum_{j=1}^m {c_j} D_t^{\al_j}u(\mathbf{x},t)=Au(\mathbf{x},t), \ \ \ u(\mathbf{x},0)=v(\mathbf{x})\in X, \ u_t(\mathbf{x},0)=0,
\end{equation}
where the operator $A$ is a generator of a strongly continuous cosine function. We suppose that the parameters $\al,\al_j,c,c_j,$ satisfy the following restrictions 
\begin{equation}\label{al}
\al\in(1,2], \  \al>\al_1...>\al_m>0, \  \al-\al_m\le 1, \ c,c_j> 0, \ j=1,...,m,\ m\ge 0.  
\end{equation}

The two-term case of problem (\ref{A}), also referred to as time-fractional telegraph equation, 
is studied in several works. For a discussion on the applications and derivation of solutions we refer to \cite{At2term} and \cite{A}, Chapter 6, as well as to the works \cite{OB, Mamchuev}, \cite{Cattaneo}, and \cite{damping, Qi}, which consider the particular cases $\al=2\al_1\in(1,2)$; $\al=2,\al_1\in(1,2)$ and $\al\in(1,2), \al_1=1$, respectively. For the general case of multi-term time-fractional diffusion-wave equation see e.g. \cite{JLTB} where an analytical solution is derived, and \cite{DehghanJCAM} where a numerical approach for this equation is developed. In the papers \cite{L13} and \cite{KLW10} a two-term
time fractional differential equation is studied in the abstract setting. Abstract framework for the study of the general multi-term case is developed in \cite{ A1, A2, KosticFCAA}. In \cite{APLizama}, a semilinear generalization of equation (\ref{A}) is studied. 

We prove the following subordination identity for the solution operator $S(t)$ of problem (\ref{A}) for the  multi-term fractional evolution equation
\begin{equation}\label{sub0}
S(t)=\int_0^\infty \ph(t,\tau)S_2(\tau)\, \mbox{d}\tau, \ \ t>0,
\end{equation}
where $S_2(t)$ is
the cosine family generated by the operator $A$ and $\ph(t,\tau)$ is a probability density function in $\tau$, that is for any $t,\tau>0$
\begin{equation}\label{PD}
\ph (t,\tau)\ge 0,\ \ \int_0^\infty \ph (t,\tau)\, \mbox{d}\tau=1. 
\end{equation}
In fact the function $\ph(t,\tau)$ is related to the fundamental solution $\GG_c(x,t)$ of the Cauchy problem for the spatially one-dimensional version of equation (\ref{A}) 
as follows:
\begin{equation}\label{Gphi}
\GG_c(x,t)=\frac12\ph(t,|x|)\ \ x\in\RR.
\end{equation}

For $\al<2$ we prove that $\ph(t,\tau)$ admits an analytic extension to a sector in the complex plane $|\arg t|<\theta_0$. This together with (\ref{sub0}) implies the analyticity of the solution operator $S(t)$ of problem (\ref{A}) in the same sector. In particular, the established analyticity property infers infinite propagation speed  for $\al<2$  as in the single-term case. 

On the other hand, if $\al=2$,  the function $\ph(t,\tau)$ vanishes for $x>t/\sqrt{c}$. This means that the propagation speed is finite ($=1/\sqrt{c}$) and therefore the integral in (\ref{sub0}) is also finite. 

Further, an explicit integral representation for the p.d.f. $\ph(t,\tau)$ is derived, which makes formula (\ref{sub0}) appropriate for computation of the solution of problem (\ref{A}) from the cosine family generated by the operator $A$. 

Our proofs use essentially some facts from the theory of Bernstein functions \cite{CMF}. We refer also to \cite{GLS} for a useful selection of definitions, properties and their application to the proof of positivity of the fundamental solution to distributed-order diffusion-wave equations, as well as to the recent work of the authors \cite{CAMWA}, where this technique is also exploited.

The rest of this paper is organized as follows. 
Section 2 is concerned with the propagation function for problem (\ref{A}), its properties and explicit integral representation. Subordination principle to cosine families is derived in Section 3 and applied to prove regularity of the solution. In Section 4 a stronger subordination result to $S_\al(t)$ is briefly presented. Section 5 contains examples. 
A selection of definitions and properties concerning Bernstein functions and related classes of functions is given in an Appendix.

\section{Propagation function
}

Consider first the following problem for the spatially one-dimensional version of the multi-term equation in (\ref{A})
\begin{eqnarray}
&&cD_t^{\al}w(x,t)+\sum_{j=1}^m {c_j} D_t^{\al_j}w(x,t)=w_{xx}(x,t),\ \ x,t>0,\label{S}\\
&&w(x,0)=w_t(x,0)=0,\ \ x>0, \label{ic}\\
&& w(0,t)=H(t),\ \ w\to 0 \ {\mathrm as}\  x\to\infty,\ \ t>0. \ \ \ \label{bc}
\end{eqnarray}
Here $D_t^{\be},\ \be>0,$ denotes time-derivative in the Caputo sense, the parameters $\al,\al_j,c,c_j, j=1,...,m,$ satisfy conditions (\ref{al}), and $H(t)$ is the Heaviside unit step function. 

The solution $w(x,t)$ of problem (\ref{S})-(\ref{ic})-(\ref{bc}) is referred to as propagation function (cf. \cite{Pruss}, Section 4.5), since it represents the propagation in time of a disturbance at $x=0$.

We find the propagation function by the use of Laplace transform $$\mathcal{L}\{f(t)\}(s)=\widehat{f}(s)=\int_0^\infty e^{-st} f(t)\, \mbox{d}t$$
employing the fundamental formula for Caputo derivatives 
\begin{equation}\label{LapD} 
\mathcal{L}\{D_t^\be f\}(s)=s^\be \widehat{f}(s)-\sum_{k=0}^{n-1}f^{(k)}(0)s^{\be-1-k},\ \ n-1<\be\le n, \ n\in\NN.
\end{equation}

By applying Laplace transform with respect to the temporal variable in (\ref{S}) and (\ref{bc}) and taking into account initial conditions (\ref{ic}) we obtain using (\ref{LapD})
the following problem 
\begin{equation}\label{gs0}
g(s)\widehat{w}(x,s)=\widehat{w}_{xx}(x,s), \ \widehat{w}(0,s)=1/s,\ \widehat{w}(x,s)\to 0 \ {\mathrm as}\  x\to\infty, 
\end{equation}
where
\begin{equation}\label{gs}
g(s)=cs^{\al}+\sum_{j=1}^m {c_j} s^{\al_j}, \ \ s>0, 
\end{equation}
with parameters $\al,\al_j,c,c_j,j=1,...,m,$ satisfying ({\ref{al}).
Here $\widehat{w}(x,s)$ denotes the Laplace transform of the function $w(x,t)$ with respect to $t$.
Solving problem (\ref{gs0}) we deduce
\begin{equation}\label{soll}
\widehat{w}(x,s)=\frac{1}{s}\exp\left(-x\sqrt{g(s)}\right). 
\end{equation}

\subsection{Properties}

For our considerations it is essential that the propagation function $w(x,t)$ is nonnegative. This is fulfilled provided $\sqrt{g(s)}$ is a Bernstein function and the proof uses a standard argument based on the theory of Bernstein functions and related classes of functions (see the proof of Theorem \ref{33}). For definitions and a selection of properties of Bernstein functions ($\BF$), completely monotone functions ($\CMF$), complete Bernstein functions ($\CBF$) and Stieltjes functions ($\SF$)  see Appendix. 
In fact, we prove next a stronger property: $\sqrt{g(s)}\in \CBF\subset \BF$.

\begin{prop}\label{pg}
Assume $g(s)$ is defined by (\ref{gs}) with parameters $\al,\al_j,c,c_j,j=1,...,m,$ satisfying conditions (\ref{al}). Then $\sqrt{g(s)}$ is a complete Bernstein function.
\end{prop}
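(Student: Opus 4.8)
I would verify that $\sqrt{g(s)}$ meets the holomorphic characterization of the class $\CBF$ recalled in the Appendix (see also \cite{CMF}): it suffices to exhibit $\sqrt{g}$ as a function holomorphic on the cut plane $\CC\setminus(-\infty,0]$, nonnegative on $(0,\infty)$, and mapping the open upper half-plane into the closed upper half-plane. Nonnegativity on $(0,\infty)$ is immediate because $g(s)>0$ there, and the case $m=0$ is trivial since then $\sqrt{g(s)}=\sqrt{c}\,s^{\al/2}$ with $\al/2\in(1/2,1]$, a complete Bernstein function; so I would assume $m\ge1$.

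First extend each power by $s^{\al_k}=|s|^{\al_k}e^{i\al_k\arg s}$, $\arg s\in(-\pi,\pi)$, so that $g$ is holomorphic on $\CC\setminus(-\infty,0]$. The heart of the proof is a sector estimate. For $s=re^{i\theta}$ with $\theta\in(0,\pi)$, the summands $c\,s^{\al}$ and $c_j\,s^{\al_j}$ are nonzero complex numbers with arguments $\al\theta$ and $\al_j\theta$, all lying in the interval $[\al_m\theta,\al\theta]$ of length $(\al-\al_m)\theta\le\theta<\pi$, where the hypothesis $\al-\al_m\le1$ is used. Since a finite sum of nonzero vectors whose arguments lie in a common interval of length $<\pi$ is again a nonzero vector with argument in that interval (they all lie in a convex cone of opening $<\pi$), one gets $g(s)\neq0$ and $\al_m\theta\le\arg g(s)\le\al\theta$ throughout the open upper half-plane; by $g(\bar s)=\overline{g(s)}$ the function $g$ is also zero-free on the lower half-plane, and $g>0$ on $(0,\infty)$.

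Hence $g$ is zero-free on the simply connected domain $\CC\setminus(-\infty,0]$ and possesses there a holomorphic square root; I would take the branch positive on $(0,\infty)$, so that $\arg\sqrt{g(s)}=\frac12\arg g(s)$ for the continuous determination of $\arg g$ vanishing on the positive axis. The estimate above then gives, for $\arg s=\theta\in(0,\pi)$, $\;0<\frac12\al_m\theta\le\arg\sqrt{g(s)}\le\frac12\al\theta<\frac{\al\pi}{2}\le\pi$, using $\theta<\pi$ and $\al\le2$; in particular $\mathrm{Im}\,\sqrt{g(s)}\ge0$ when $\mathrm{Im}\,s>0$. Thus $\sqrt{g}$ is holomorphic on $\CC\setminus(-\infty,0]$, nonnegative on $(0,\infty)$, and maps the upper half-plane into itself, which is the desired characterization of $\CBF$.

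The only genuine obstacle I foresee is the sector estimate of the second paragraph: making rigorous that the resultant of the $m+1$ summands stays in the cone spanned by their arguments, and noticing that this is precisely where both hypotheses $\al\le2$ and $\al-\al_m\le1$ enter — everything else (holomorphic extension, branch choice, conjugate symmetry) is routine. A more ``algebraic'' variant would write $\sqrt{g(s)}=s^{\al/2}\bigl(c+\sum_{j=1}^m c_j s^{\al_j-\al}\bigr)^{1/2}$, observe that $c+\sum_j c_j s^{-(\al-\al_j)}$ is a Stieltjes function because each exponent $\al-\al_j\in(0,1]$ (once more by $\al-\al_m\le1$), hence so is its square root, and then multiply by $s^{\al/2}\in\CBF$; but the closing product step again needs the same angle bookkeeping, so the direct route looks cleaner.
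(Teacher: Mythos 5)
Your argument is correct, but it follows a genuinely different route from the paper. The paper's proof is purely ``algebraic'': it factors $g(s)=s\,f(s)$ (if $\al_m\ge 1$) or $g(s)=s^{\al_m}f(s)$ (if $\al_m<1$), observes that $f$ is then a sum of powers $s^{\be}$ with $\be\in[0,1]$ and hence a complete Bernstein function, and concludes via the closure property~(F) that $\sqrt{g}=\sqrt{s^{\rho}}\sqrt{f(s)}\in\CBF$; the constraint $\al-\al_m\le 1$ enters only to keep the exponents of $f$ inside $[0,1]$. You instead verify the Nevanlinna--Pick characterization of $\CBF$ directly, tracking arguments: each summand $c_js^{\al_j}$ has argument $\al_j\theta$, the spread $(\al-\al_m)\theta<\pi$ keeps the resultant in a pointed convex cone (hence nonzero with argument in $[\al_m\theta,\al\theta]$), and halving plus $\al\le 2$ pins $\arg\sqrt{g(s)}$ in $(0,\pi)$. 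Your sector estimate is sound, and it has the virtue of showing transparently where both hypotheses $\al\le2$ and $\al-\al_m\le1$ are used; it also yields the zero-free extension of $\sqrt{g}$ to the cut plane as a byproduct, which the paper needs separately later. Two small caveats: the converse characterization you invoke (nonnegative on $(0,\infty)$, holomorphic on the upper half-plane with nonnegative imaginary part there, implies $\CBF$) is \emph{not} in the paper's Appendix --- property~(G) there is only the necessary direction --- so you must cite it from \cite{CMF} (where it also requires the existence of the finite limit at $0+$, here trivially $\sqrt{g(0+)}=0$); and your closing ``algebraic variant'' is indeed incomplete as you admit, since a product of a $\CBF$ and a square root of a Stieltjes function is not in a standard closure class without further work, so the direct route is the one to keep.
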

\begin{proof}
Consider first the case $\al_m\ge 1$. Set $f(s)=g(s)/s$. Since $2>\al,\al_j\ge 1$ the function $f(s)=cs^{\al-1}+\sum_{j=1}^m {c_j} s^{\al_j-1}\in\CBF$ as a sum of complete Bernstein functions.   Also, $s\in\CBF$. Then, applying property~(F) from the Appendix with $p=q=1/2$ it follows that $\sqrt{g(s)}=\sqrt{s}\sqrt{f(s)}\in\CBF.$

In the case $\al_m< 1$ we set $f(s)=g(s)/s^{\al_m}$. The assumption $0<\al-\al_m\le 1$ implies again $f(s)\in \CBF$. Since also $s^{\al_m}\in\CBF$, we obtain in the same way as above $\sqrt{g(s)}=\sqrt{s^{\al_m}}\sqrt{f(s)}\in\CBF$. 
\end{proof}

Let us note that constraints (\ref{al}) on the parameters of the problem are essential for the proof of Proposition~\ref{pg} and, therefore, for deriving most of the results in the present work. To illustrate this, suppose that the restriction on the distance between the largest and the smallest order of fractional derivative is violated, i.e. $\al-\al_m>1$. 

 Consider a simple two-term equation for which $g(s)=s^\al+s^{\al_1}$.  
If $\al-\al_1>1$ then representation $g(s)=s^{\al_1}(s^{\al-\al_1}+1)$ implies that there exists $s_0\in \CC\backslash (-\infty,0]$ such that $g(s_0)=0$. Therefore, ${g(s)}^{1/2}$ has a branch point in $\CC\backslash (-\infty,0]$ and, according to property~(G) in the Appendix, ${g(s)}^{1/2}\notin \CBF$. 

Also, the weaker property $\sqrt{g(s)}\in \BF$, which is sufficient for the proof of Theorem~\ref{33}, does not hold without a restriction on the distance $\al-\al_1$. Considering the above two-term example with different values of the parameters $\al$ and $\al_1$ such that $\al-\al_1>1$ (e.g. $\al=1.9,\al_1\in(0,0.5]$; $\al=1.8,\al_1\in(0,0.3]$) we obtain by direct computation that the second derivative $d^2/ds^2(\sqrt{g(s)})$ admits positive values for some $s>0$. Therefore, the function $\sqrt{g(s)}$ is not concave for all $s>0$, which implies that $\sqrt{g(s)}\notin \BF$. 

Proposition~\ref{pg} 
implies  important properties (\ref{propertiesw}) of the propagation function. 
To the best of the authors' knowledge, it is an open problem whether condition $\al-\al_m\le 1$ is necessary for these properties. 

\begin{theorem}\label{33}
The propagation function $w(x,t)$ satisfies the properties
\begin{equation}\label{propertiesw}
w(x,t)\ge 0,\ \ \ 
w_t(x,t)\ge 0,\ \ \  -
w_x(x,t)\ge 0,\ \ \ \ x,t >0.
\end{equation}
\end{theorem}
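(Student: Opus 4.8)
The plan is to exploit the Laplace-domain representation (\ref{soll}), namely $\widehat{w}(x,s)=s^{-1}\exp(-x\sqrt{g(s)})$, together with Proposition~\ref{pg}, which gives $\sqrt{g(s)}\in\CBF\subset\BF$. The strategy is the standard ``Bernstein function $\Rightarrow$ completely monotone composition'' argument: if $\psi\in\BF$ then $e^{-x\psi(s)}\in\CMF$ in $s$ for every fixed $x>0$ (this is one of the basic closure properties of $\BF$ listed in the Appendix, since $e^{-xr}$ is completely monotone and composition of a $\CMF$ with a $\BF$ is $\CMF$). Hence $\widehat{w}(x,s)$, being a product of the two completely monotone functions $1/s$ and $e^{-x\sqrt{g(s)}}$, is itself completely monotone in $s$ for each $x>0$. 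By Bernstein's theorem, a function that is completely monotone on $(0,\infty)$ is the Laplace transform of a nonnegative measure; since $\widehat{w}(x,\cdot)$ is the Laplace transform of $w(x,\cdot)$, this yields $w(x,t)\ge 0$ for all $x,t>0$, the first assertion in (\ref{propertiesw}).

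For the second inequality, $w_t(x,t)\ge 0$, I would differentiate in $t$ on the Laplace side: using the zero initial conditions (\ref{ic}) one has $\mathcal{L}\{w_t(x,\cdot)\}(s)=s\widehat{w}(x,s)=\exp(-x\sqrt{g(s)})$, which is again completely monotone in $s$ by the same composition argument (now without even needing the factor $1/s$). Bernstein's theorem then gives $w_t(x,t)\ge 0$. For the third inequality, $-w_x(x,t)\ge 0$, differentiate (\ref{soll}) in $x$: $\mathcal{L}\{-w_x(x,\cdot)\}(s)=\dfrac{\sqrt{g(s)}}{s}\exp\!\left(-x\sqrt{g(s)}\right)$. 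Here I would write this as the product of three completely monotone functions in $s$: the factor $1/s$, the factor $e^{-x\sqrt{g(s)}}$, and the factor $\sqrt{g(s)}/?$; more precisely, since $\sqrt{g(s)}\in\BF$, its derivative $\tfrac{d}{ds}\sqrt{g(s)}\in\CMF$, but that is not quite what appears. The clean route is to note $\sqrt{g(s)}\,e^{-x\sqrt{g(s)}}=-\tfrac{\partial}{\partial x}e^{-x\sqrt{g(s)}}$, and that $-\tfrac{\partial}{\partial x}$ of a function that is $\CMF$ in $s$ and nonincreasing in $x$ stays $\CMF$ in $s$; alternatively, use that $\psi e^{-x\psi}$ with $\psi\in\CBF$ (hence $\psi\in\SF$ composed appropriately, or simply $r e^{-xr}$ completely monotone in $r\ge 0$ composed with $\psi\in\BF$). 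Either way, multiplying by the completely monotone $1/s$ keeps the product completely monotone, so $-w_x(x,t)\ge 0$ follows from Bernstein's theorem.

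The main obstacle is purely bookkeeping: matching each Laplace-domain expression to the exact closure property of $\BF$, $\CMF$, $\CBF$, $\SF$ stated in the Appendix, and in particular justifying that $r\mapsto e^{-xr}$ and $r\mapsto r e^{-xr}$ are completely monotone on $[0,\infty)$ so that their composition with the (complete) Bernstein function $\sqrt{g(s)}$ is completely monotone in $s$. One should also check the mild technical point that $\widehat{w}(x,\cdot)$ and the transforms of $w_t,-w_x$ indeed decay suitably at $s\to\infty$ and are finite for $s>0$ (immediate from $\sqrt{g(s)}\to\infty$), so that Bernstein's representation theorem applies and the inverse transforms are genuine nonnegative functions rather than just measures; this is where one invokes, as the authors indicate, that the relevant functions lie in better subclasses ($\CBF/\SF$) guaranteeing the densities are functions. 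No step is deep once Proposition~\ref{pg} is in hand; the proof is essentially a three-line application of $\psi\in\BF\Rightarrow e^{-x\psi}\in\CMF$ plus Bernstein's theorem, repeated for $w$, $w_t$, and $-w_x$.
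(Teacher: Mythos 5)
Your treatment of the first two inequalities is exactly the paper's argument: $e^{-x\sqrt{g(s)}}\in\CMF$ by property~(C) since $\sqrt{g(s)}\in\BF$, multiply by $1/s\in\CMF$ for $\widehat w$, and use $w(x,0)=0$ so that $\LL\{w_t\}=e^{-x\sqrt{g(s)}}$; Bernstein's theorem then gives nonnegativity. (Your closing worry about measures versus functions is not needed: $w$ is already a function, and nonnegativity of the representing measure is all that is used.)

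The third inequality, however, contains a genuine gap. Every justification you offer for the complete monotonicity of the factor $\sqrt{g(s)}\,e^{-x\sqrt{g(s)}}$ fails, and in fact that function is \emph{not} completely monotone in general: take $\sqrt{g(s)}=s$ (i.e.\ $c=1$, $\al=2$, $m=0$), which gives $s e^{-xs}$ --- this vanishes at $s=0$ and increases on $(0,1/x)$, so it cannot be in $\CMF$. Concretely: (i) $r\mapsto r e^{-xr}$ is not completely monotone (same reason), so property~(C) does not apply to it composed with $\sqrt{g(s)}\in\BF$; (ii) $\psi\in\CBF$ does not imply $\psi\in\SF$; (iii) the claim that $-\partial_x$ of a family of $\CMF$ functions is again $\CMF$ is unproven --- it amounts to a limit of \emph{differences} of completely monotone functions, and differences of $\CMF$ functions need not be $\CMF$. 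The factor $1/s$ is therefore not an innocent bystander to be peeled off: it must be grouped with $\sqrt{g(s)}$. This is precisely what the paper does: since $\sqrt{g(s)}\in\BF$, property~(B) gives $\sqrt{g(s)}/s\in\CMF$, and then
\[
\LL\{-w_x\}(x,s)=\frac{\sqrt{g(s)}}{s}\,e^{-x\sqrt{g(s)}}
\]
is a product of the two completely monotone functions $\sqrt{g(s)}/s$ and $e^{-x\sqrt{g(s)}}$, hence completely monotone by property~(A). With that regrouping your proof coincides with the paper's.
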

\begin{proof}
According to Bernstein's theorem it is sufficient to prove that the Laplace transforms of the three functions in (\ref{propertiesw}) are completely monotone. We have from Proposition~\ref{pg} that $\sqrt {g(s)}\in\BF$.
  Then, by property~(C) in the Appendix, the function $\exp\left(-x \sqrt{g(s)}\right)\in\CMF$ as a composition of the completely monotone exponential function and the Bernstein function  $\sqrt {g(s)}$.
	 Since $1/s\in\CMF$, by property~(B) in the Appendix also $\sqrt{g(s)}/s\in\CMF$. Then 
$\widehat{w}(x,s)= \frac{1}{s}\exp\left(-x \sqrt{g(s)}\right)\in \CMF$ as well as
\begin{equation}\label{dw}
\LL\{-w_x\}(x,s)=-\frac{\partial}{\partial x}\widehat{w}(x,s)=\frac{\sqrt{g(s)}}{s}\exp\left(-x \sqrt{g(s)}\right)\in\CMF
\end{equation} 
as products of two completely monotone functions. 

Further, (\ref{soll}) and (\ref{gs}) imply $\lim_{t\to 0}w(x,t)=\lim_{s\to \infty}s\widehat{w}(x,s)=0$, thus
$$\LL\{w_t\}(x,s)=s\widehat{w}(x,s)-w(x,0)=\exp\left(-x \sqrt{g(s)}\right)\in\CMF.$$
 \end{proof}

Theorem \ref{33} implies that $w(x,t)$ is a nonincreasing function in $x$ and nondecreasing function in $t$ with limiting values found by applying Tauberian theorems:
\begin{equation}\label{limits}
\lim_{t\to 0}w(x,t)=\lim_{s\to \infty}s\widehat{w}(x,s)=0,\ \ \ 
\lim_{t\to +\infty}w(x,t)=\lim_{s\to 0}s\widehat{w}(x,s)=1.
\end{equation}

Denote by $\GG_c(x,t)$ and $\GG_s(x,t)$ the fundamental solutions of the Cauchy and signaling problems for equation (\ref{S}) (for definitions see e.g. \cite{MainardiAML, LuchkoMainardi}). The fundamental solutions can be expressed in terms of the propagation function $w(x,t)$ as follows:
\begin{equation}\label{CS}
\GG_c(x,t)=-\frac{1}{2}w_x(|x|,t),\ \ x\in\RR;\ \ \ \GG_s(x,t)=w_t(x,t),\ \ x >0,
\end{equation}
and therefore Theorem \ref{33}  implies that they are nonnegative functions. Moreover, it follows from (\ref{CS}), (\ref{limits}) and (\ref{bc}) that 
\begin{equation}\label{Gpdf}
\int_{-\infty}^\infty\GG_c(x,t) \mbox{d}x=1,\ t>0;\ \ \ \int_{0}^\infty\GG_s(x,t) \mbox{d}t=1,\ x>0.
\end{equation}

Further properties of the fundamental solutions reflect those of the propagation function. 

For $\al<2$ the propagation function $w(x,t)$ (and hence also the fundamental solutions) admit an analytic extension to a sector in the complex plane $t\in\CC\backslash 0$, $|\arg t|<\theta_0$ (the proof is essentially the same as that of Theorem \ref{an}). Therefore, for any $x>0$ the set of zeros of $w(x,t)$ on $t>0$ can be only discrete. This together with  (\ref{propertiesw}) and (\ref{limits}) implies that $w(x,t)>0$ for all $x,t>0$ and a disturbance spreads infinitely fast. 

\begin{theorem}\label{infinite}
If $1<\al<2$ then $w(x,t)>0$ for all $x,t>0$. 
\end{theorem}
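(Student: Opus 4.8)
The plan is to deduce strict positivity from three ingredients: the sign condition $w(x,t)\ge 0$ and the monotonicity $w_t(x,t)\ge 0$ of Theorem~\ref{33}, the limit $\lim_{t\to+\infty}w(x,t)=1$ from (\ref{limits}), and an analyticity argument that prevents $w(x,\cdot)$ from vanishing on an interval. The only substantial new point is the last one: for $1<\al<2$ and each fixed $x>0$ the function $t\mapsto w(x,t)$ extends to a function analytic on a sector $\{t\in\CC\setminus\{0\}:|\arg t|<\theta_0\}$ for some $\theta_0>0$. This is established exactly as Theorem~\ref{an}; I only indicate why $\al<2$ is what makes it work.

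Starting from (\ref{soll}), $\widehat w(x,s)=s^{-1}\exp\bigl(-x\sqrt{g(s)}\bigr)$, note that by (\ref{gs}) one has $g(s)\sim cs^{\al}$ as $|s|\to\infty$, so the complete Bernstein function $\sqrt{g(s)}$ of Proposition~\ref{pg} grows like $\sqrt c\,|s|^{\al/2}$ with exponent $\al/2<1$. Since $\sqrt{g}\in\CBF$ it is zero-free on $\CC\setminus(-\infty,0]$ and satisfies $|\arg\sqrt{g(s)}|\le|\arg s|$ there; because $\al/2<1$, the argument of $\sqrt{g(s)}$ in fact stays bounded away from $\pm\pi/2$ even for $|\arg s|$ slightly larger than $\pi/2$, and one obtains, for suitable $\delta>0$ and $c_0>0$, the bound $\mathrm{Re}\,\sqrt{g(s)}\ge c_0|s|^{\al/2}$ for large $|s|$ on the sector $|\arg s|\le\pi/2+\delta$. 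Hence, for fixed $x>0$, $\widehat w(x,\cdot)$ is analytic on that sector and the factor $\exp(-x\,\mathrm{Re}\,\sqrt{g(s)})$ makes it decay faster than any power of $|s|$; deforming the Bromwich inversion contour into the sector then yields an analytic extension of $w(x,\cdot)$ to $|\arg t|<\delta=:\theta_0$. (For $\al=2$ the exponent $\al/2$ equals $1$, $\sqrt{g}$ grows only linearly, the contour cannot be pushed past the imaginary axis, and indeed $w$ vanishes for $x>t/\sqrt c$.)

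With this in place, fix $x>0$. If $w(x,\cdot)$ vanished identically on the sector, then in particular $w(x,t)=0$ for all $t>0$, contradicting $\lim_{t\to+\infty}w(x,t)=1$. So $w(x,\cdot)$ is a nonzero analytic function on a connected domain, and by the identity theorem its zeros in the sector are isolated. Now suppose $w(x,t_0)=0$ for some $t_0>0$: since $w(x,t)\ge 0$ and $w$ is nondecreasing in $t$ by (\ref{propertiesw}), we get $0\le w(x,t)\le w(x,t_0)=0$ for every $t\in(0,t_0]$, so $w(x,\cdot)$ vanishes on the nondegenerate interval $(0,t_0]$ — a set of non-isolated zeros inside the sector, contradicting what was just proved. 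Therefore $w(x,t)>0$ for all $t>0$; as $x>0$ was arbitrary, $w(x,t)>0$ for all $x,t>0$.

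The main obstacle is clearly the sectorial analytic continuation: the soft part (identity theorem plus monotonicity) is immediate, but obtaining, uniformly on a sector strictly containing the right half-plane, the lower bound on $\mathrm{Re}\,\sqrt{g(s)}$ that both controls $|\exp(-x\sqrt{g(s)})|$ and produces the rapid decay needed to deform the inversion contour requires care; it rests on $\al/2<1$ together with the complete Bernstein property of Proposition~\ref{pg} (zero-free in the cut plane, argument dominated by that of $s$). This is precisely the estimate carried out for Theorem~\ref{an}, which is why the excerpt defers to it.
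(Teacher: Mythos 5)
Your proof is correct and takes essentially the same route as the paper: a sectorial analytic extension of $t\mapsto w(x,t)$ obtained exactly as in Theorem~\ref{an}, so that the zeros on $t>0$ are isolated, combined with the monotonicity and nonnegativity from Theorem~\ref{33} and the limit $w(x,t)\to 1$ as $t\to+\infty$ to exclude any zero. The detail you add on deforming the Bromwich contour using $\mathrm{Re}\,\sqrt{g(s)}\ge c_0|s|^{\al/2}$ on a sector $|\arg s|\le \pi/2+\de$ is just the estimate $|\arg\sqrt{g(s)}|\le\tfrac{\al}{2}|\arg s|<\pi/2-\e\al/2$ from the paper's proof of Theorem~\ref{an}, so nothing is missing.
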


On the other hand, in the case $\al=2$, a disturbance spreads with finite speed as in the classical wave and telegraph equations. However, in contrast to the classical equations, in the case when at least one more time-derivative in equation (\ref{S}) is present, which is of noninteger order, a phenomenon of coexistence of finite propagation speed and absence of wave front is established. This is a memory effect, not observed in linear integer-order differential equations (see \cite{Pruss} for a discussion in the general case of Volterra equations). 

We will prove that 
for $\al=2$ there is a finite propagation speed $1/\sqrt{c}$. 
Define the function
$$h(s)=\sqrt{g(s)}-\sqrt{c} s.$$  Then (\ref{soll}) implies 
\begin{equation}\label{propspeed}
w(x,t)=\LL^{-1}\left\{\frac{1}{s}\exp\left(-xh(s)\right)\exp\left(-x\sqrt{c} s\right)\right\}
=w_0(x,t-\sqrt{c}x)H(t-\sqrt{c} x),
\end{equation}
where
$$w_0(x,t)=\LL^{-1}\left\{\frac{1}{s}\exp\left(-xh(s)\right)\right\}.
$$ 
Here we have used the property $\LL\{f(t-a)H(t-a)\}(s)=\exp(-as)\LL\{f\}(s)$.
Since $h(s), h'(s)\ge 0$ for $s>0$ and $\sqrt{g(s)}\in\BF$, it follows that $h(s)\in\BF$. Therefore, $w_0(x,t)\ge 0$ by the same argument as in the proof of Theorem \ref{33}. Formula (\ref{propspeed}) implies that the propagation function $w(x,t)$ vanishes for $x>t/\sqrt{c}$, i.e. the propagation speed is $1/\sqrt{c}$.

\begin{theorem}\label{w0}
If $\al=2$ then $w(x,t)\equiv 0$ for $x>t/\sqrt{c}$. 
\end{theorem}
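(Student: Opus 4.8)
The plan is to isolate a pure time delay inside the Laplace transform (\ref{soll}). Setting $\al=2$ in (\ref{gs}) gives $g(s)=cs^{2}+\sum_{j=1}^{m}c_{j}s^{\al_{j}}$, so that $\sqrt{g(s)}=\sqrt{c}\,s+h(s)$ with $h(s):=\sqrt{g(s)}-\sqrt{c}\,s$, and (\ref{soll}) factors as $\widehat{w}(x,s)=\frac{1}{s}\exp(-x h(s))\exp(-\sqrt{c}\,x s)$. Since $\exp(-as)$ is the Laplace symbol of the shift $f(t)\mapsto f(t-a)H(t-a)$, inversion yields $w(x,t)=w_{0}(x,t-\sqrt{c}\,x)\,H(t-\sqrt{c}\,x)$, where $w_{0}(x,t):=\LL^{-1}\{s^{-1}\exp(-x h(s))\}$. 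Granting that $w_{0}(x,\cdot)$ is a genuine nonnegative function supported on $[0,\infty)$, the Heaviside factor vanishes whenever $t-\sqrt{c}\,x<0$, that is for $x>t/\sqrt{c}$, and the theorem follows.

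To make this rigorous I would first check, for each fixed $x>0$, that $s^{-1}\exp(-x h(s))\in\CMF$; by Bernstein's theorem this identifies $w_{0}(x,\cdot)$ with the Laplace inverse of a positive measure on $[0,\infty)$ (in fact, since $s^{-1}\exp(-xh(s))$ is bounded by $s^{-1}$ and tends to $1$ as $s\to 0$, $w_{0}(x,\cdot)$ is a bounded nondecreasing function), so the delay identity above holds pointwise and forces $w(x,t)=0$ for $x>t/\sqrt{c}$. Exactly as in the proof of Theorem \ref{33}, and using $s^{-1}\in\CMF$ together with closedness of $\CMF$ under products (property~(B) in the Appendix), it suffices to show $\exp(-x h(s))\in\CMF$, which by property~(C) in the Appendix reduces to the claim $h\in\BF$.

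The heart of the matter, and the only delicate point, is therefore $h\in\BF$; it cannot be read off directly from Proposition \ref{pg}, since the difference $\sqrt{g(s)}-\sqrt{c}\,s$ of two Bernstein functions need not be a Bernstein function. I would argue via the derivative. First, $h(s)\ge 0$ for every $s>0$, because $g(s)-cs^{2}=\sum_{j=1}^{m}c_{j}s^{\al_{j}}\ge 0$ gives $\sqrt{g(s)}\ge\sqrt{c}\,s$. Second, $h'(s)=(\sqrt{g})'(s)-\sqrt{c}$; by Proposition \ref{pg} we have $\sqrt{g}\in\BF$, hence $(\sqrt{g})'\in\CMF$ and in particular $(\sqrt{g})'$ is nonincreasing, while the asymptotics $g(s)\sim cs^{2}$, $g'(s)\sim 2cs$ as $s\to\infty$ (recall $\al_{j}<2$) give $(\sqrt{g})'(s)=g'(s)/(2\sqrt{g(s)})\to\sqrt{c}$; therefore $(\sqrt{g})'(s)\ge\sqrt{c}$ and $h'(s)\ge 0$ for all $s>0$. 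Since subtracting from a completely monotone function its (finite, nonnegative) limit at $+\infty$ yields again a completely monotone function — the derivatives of order $\ge 1$ are unchanged and the result is nonnegative — we obtain $h'\in\CMF$, and together with $h\ge 0$ this is precisely $h\in\BF$. Feeding this back into the preceding paragraph closes the proof.
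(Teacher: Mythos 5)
Your argument is exactly the paper's: the same factorization $\sqrt{g(s)}=\sqrt{c}\,s+h(s)$, the same Laplace shift identity leading to (\ref{propspeed}), and the same reduction to $h\in\BF$, which you in fact justify in more detail than the paper does (the paper merely asserts that $h,h'\ge 0$ together with $\sqrt{g}\in\BF$ give $h\in\BF$, while you supply the limit argument $(\sqrt{g})'(s)\downarrow\sqrt{c}$). One cosmetic slip: closedness of $\CMF$ under pointwise products is property~(A) of the Appendix, not (B), but this does not affect correctness.
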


Except in the two classical cases of wave equation ($m=0, \al=2$) and classical telegraph equation ($m=1, \al=2,\al_1=1$), $\lim_{s\to \infty}h(s)=\infty$, which implies that a wave front (jump discontinuity) at $x=t/\sqrt{c}$ is not present (cf. \cite{Pruss}, Chapter 5).

The behaviour of the propagation function $w(x,t)$ is illustrated in Figs.~1-3. Three different cases for the two-term equation are considered: the classical telegraph equation (Fig.~1) which exhibits finite propagation speed and wave front, an equation with $\al=2$ and $\al_1\in (1,2)$ (Fig.~2) exhibiting finite propagation speed and absence of wave front, and an equation with $\al<2$ (Fig.~3) exhibiting infinite propagation speed. Plots are obtained by numerical computation based on the explicit integral representation for $w(x,t)$ derived next. 

The numerical computations for producing all plots in this work are performed with MATLAB. For the numerical calculation of the improper integrals in (\ref{reshenie}), (\ref{sf}) and (\ref{Sncos}) the MATLAB function ``integral'' is used. 

\subsection{Explicit representation}
Let us first note that for multivalued functions in $\CC$ such as $s^\al=\exp(\al\ln s)$ always the principal branch is considered in this work.

Applying 
 the complex Laplace inversion formula to (\ref{soll}) yields: 
\begin{eqnarray}
w(x,t)&=&\frac{1}{2\pi\mathrm{i}}\int_{\g-\mathrm{i}\infty}^{\g+\mathrm{i}\infty}e^{st}\widehat{w}(x,s)\,\mathrm{d}s\nonumber\\
&=&\frac{1}{2\pi\mathrm{i}}\int_{\g-\mathrm{i}\infty}^{\g+\mathrm{i}\infty}\exp\left(st-x\sqrt{g(s)}\right)\,\frac{\mathrm{d}s}{s},\ \ \g>0.\label{in}
\end{eqnarray}

Since $\sqrt{g(s)}\in\CBF$ it can be analytically extended to $\CC\backslash (-\infty,0]$. Therefore, this holds also for the function under the integral sign in (\ref{in}). By the Cauchy's theorem, the integration on the contour $\{s=\g+\mathrm{i}r,\ r\in(-\infty,+\infty)\}$ 
can be replaced
by integration on the contour 
$D_R^-\cup D\cup D_0\cup D_R^+$, where (with appropriate orientation)
$$D=\{s=\mathrm{i}r,\ r\in(-\infty,-\e)\cup (\e,\infty)\},\ 
D_\e=\{ s=\e e^{\mathrm{i}\theta}, \ \theta\in[-\pi/2,\pi/2]\},$$  
$$
D_R^+=\{|s|=R, \ \Re s\in [0,\g], \ \Im s>0\},\ \ D_R^-=\{|s|=R, \ \Re s\in [0,\g], \ \Im s<0\}.
$$

To prove that the integrals on the arcs $D_R^-$ and $D_R^+$ vanish for $R\to \infty$ it is sufficient to show that for any $x>0$ the function $\widehat{w}(x,s)$ is uniformly bounded on $D_{R_n}^+$ and $D_{R_n}^-$, where $R_n\to\infty$, and that  $\widehat{w}(x,s)\to 0 $
for $s\in D_R^\pm$ and $R\to \infty$,
see e.g. \cite{Ditkin}, Chapter 2, Lemma 2.  
This follows from the fact that  
$\Re \sqrt{g(s)}\ge 0$ for $\Re s\ge 0$ 
and therefore 
\begin{equation}\label{estint}
\left|\frac{1}{s}\exp\left(-x\sqrt{g(s)}\right)\right|\le \frac{1}{R}\exp\left(-x\Re\sqrt{g(s)}\right)\le \frac{1}{R},\ \ s\in D_{R}^\pm.
\end{equation}
The integral on the semi-circular contour $D_\e$ equals $1/2$ when $\e\to 0$. This can be obtained by applying Jordan's lemma (or by direct check) and using that
$$
\lim_{s\to 0} s\left(\frac{1}{s}\exp\left(st-x\sqrt{g(s)}\right)\right)=1.
$$
 Integration on the contour $D$ yields after letting  $\e\to 0$ and $R\to\infty$:
$$
\frac{1}{2\pi\mathrm{i}}\int_D\frac{1}{s}\exp\left(st-x\sqrt{g(s)}\right)\,\mathrm{d}s=\frac1\pi \int_0^\infty \frac 1r\Im \exp\left(\mathrm{i}rt-x\sqrt{g(\mathrm{i}r)}\right) \,\mathrm{d}r.
$$
Here we have used the fact that $\sqrt{g(s^*)}=\left(\sqrt{g(s)}\right)^*$, where $*$ denotes the complex conjugate. 
Applying the formula for real and imaginary parts of the square root of a complex number we obtain the following result.

\begin{theorem}
The propagation function $w(x,t)$ admits the integral representation:
\begin{equation}\label{reshenie}
w(x,t)=\frac12+\frac 1\pi\int_0^\infty\exp(-x K^+(r))\sin(rt-xK^-(r))\,\frac{\mathrm{d}r}{r},\ \ x,t>0,
\end{equation}
where 
\begin{equation}\label{Kpm}
K^\pm(r)=\frac{1}{\sqrt{2}}\left(\left(A^2(r)+B^2(r)\right)^{1/2}\pm A(r)\right)^{1/2}
\end{equation}
with 
\begin{eqnarray}
&&A(r)=\Re g(ir)=cr^{\al}\cos(\al\pi/2)+\sum_{j=1}^m {c_j} r^{\al_j}\cos(\al_j\pi/2),\nonumber\\
&&B(r)=\Im g(ir)=cr^{\al}\sin(\al\pi/2)+\sum_{j=1}^m {c_j} r^{\al_j}\sin(\al_j\pi/2).\nonumber
\end{eqnarray}
\end{theorem}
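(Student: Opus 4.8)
The plan is to pick up from the last displayed identity above, which already expresses $w(x,t)$ as the constant $1/2$ contributed by the small semicircle $D_\e$ (in the limit $\e\to 0$) plus
\[
\frac1\pi\int_0^\infty\frac1r\,\Im\exp\!\left(\mathrm{i}rt-x\sqrt{g(\mathrm{i}r)}\right)\mathrm{d}r,
\]
a formula obtained after deforming the Bromwich contour in (\ref{in}) onto the indented imaginary axis, discarding the arcs $D_R^\pm$ as $R\to\infty$ by the bound (\ref{estint}), and invoking the reflection symmetry $\sqrt{g(\bar s)}=\overline{\sqrt{g(s)}}$. What is left to do is purely the explicit evaluation of the real and imaginary parts of $\sqrt{g(\mathrm{i}r)}$.

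First I would record $g(\mathrm{i}r)=A(r)+\mathrm{i}B(r)$ with $A(r),B(r)$ as in the statement, which follows at once from $(\mathrm{i}r)^{\be}=r^{\be}\bigl(\cos(\be\pi/2)+\mathrm{i}\sin(\be\pi/2)\bigr)$ on the principal branch, applied to $\be=\al$ and $\be=\al_j$. The essential elementary observation is that $B(r)\ge 0$ for every $r>0$: each $\sin(\al_j\pi/2)>0$ since $\al_j\in(0,2)$, and $\sin(\al\pi/2)\ge 0$ since $\al\in(1,2]$. Hence the principal square root of $g(\mathrm{i}r)$ lies in the closed first quadrant, and the standard formulas for the square root of a complex number give $\sqrt{g(\mathrm{i}r)}=K^+(r)+\mathrm{i}K^-(r)$ with $K^\pm(r)$ exactly as in (\ref{Kpm}); one checks directly that $K^+(r)^2-K^-(r)^2=A(r)$, $2K^+(r)K^-(r)=|B(r)|=B(r)$, and $K^+(r)\ge 0$, which pins down the branch. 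Then
\[
\exp\!\left(\mathrm{i}rt-x\sqrt{g(\mathrm{i}r)}\right)=e^{-xK^+(r)}\Bigl(\cos\bigl(rt-xK^-(r)\bigr)+\mathrm{i}\sin\bigl(rt-xK^-(r)\bigr)\Bigr),
\]
so taking the imaginary part under the integral sign yields (\ref{reshenie}).

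The step I expect to be the main obstacle is not this final algebra but the analytic bookkeeping already sketched before the statement: the uniform boundedness and decay of $\widehat{w}(x,s)$ on the arcs $D_R^\pm$ (which rests on $\Re\sqrt{g(s)}\ge 0$ for $\Re s\ge 0$, a consequence of Proposition~\ref{pg}), the identification of the $D_\e$-contribution through $\lim_{s\to 0}s\widehat{w}(x,s)=1$, and, most delicately, the convergence of the resulting improper integral in $r$. Near $r=0$ one has $g(\mathrm{i}r)\sim c_m r^{\al_m}e^{\mathrm{i}\al_m\pi/2}$ (the smallest exponent; $g(\mathrm{i}r)\sim cr^{\al}e^{\mathrm{i}\al\pi/2}$ when $m=0$), so $K^-(r)=O(r^{\al_m/2})$ and the integrand is $O(1)+O(r^{\al_m/2-1})$, integrable because $\al_m>0$; near $r=\infty$ the leading term $g(\mathrm{i}r)\sim cr^{\al}e^{\mathrm{i}\al\pi/2}$ gives $K^+(r)\sim\sqrt{c}\,r^{\al/2}\cos(\al\pi/4)\to\infty$ for $\al<2$, so that $e^{-xK^+(r)}$ decays at stretched-exponential rate, while for $\al=2$ one uses instead the next-order term of $g(\mathrm{i}r)$ to obtain $K^+(r)\to\infty$ — except precisely in the classical cases $m=0$ and $m=1,\al_1=1$, where a wave front is present and (\ref{reshenie}) is read in the conditionally convergent sense consistent with Theorem~\ref{w0} and (\ref{propspeed}).
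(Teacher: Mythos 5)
Your proof is correct and takes essentially the same route as the paper: the theorem is obtained exactly by the contour deformation of (\ref{in}) sketched before the statement, followed by writing $\sqrt{g(\mathrm{i}r)}=K^+(r)+\mathrm{i}K^-(r)$ via the standard square-root formulas. Your explicit check that $B(r)\ge 0$ (so the principal root lies in the closed first quadrant and the sign of $K^-$ is the right one), and your convergence discussion at $r\to 0$ and $r\to\infty$, are correct elaborations of details the paper only states in passing.
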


To check that the obtained integral in (\ref{reshenie}) is convergent we note that $K^\pm(r)>0$, $K^\pm(r)\sim r^{\al_m/2}$ as $r\to 0$ and $K^\pm(r)\sim r^{\al/2}$ as $r\to \infty.$
Therefore, the function under the integral sign in (\ref{reshenie}) has an integrable singularity at $r=0$, while at $r\to\infty$ the term $\exp(-x K^+(r))$ ensures integrability  not only of this function, but also of its derivatives with respect to $t$. Therefore, $w(x,t)$ is well defined and infinitely differentiable in $t$ function.


\begin{cor}
In the single-term case $m=0$ and $c=1$ the propagation function $w(x,t)$ admits the integral representation:
\begin{equation}\label{reshenie1}
w(x,t)=\frac12+\frac 1\pi\int_0^\infty\exp\left(-x r^{\al/2}\cos(\al\pi/4)\right)\sin\left(rt-xr^{\al/2}\sin(\al\pi/4)\right)\,\frac{\mathrm{d}r}{r},\ \ x,t>0.
\end{equation}
\end{cor}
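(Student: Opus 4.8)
The plan is to obtain (\ref{reshenie1}) as the direct specialization of the general integral representation (\ref{reshenie}) to the parameters $m=0$, $c=1$, for which $g(s)=s^\al$. First I would evaluate $g$ on the imaginary axis: using the principal branch, $(\mathrm{i}r)^\al=r^\al e^{\mathrm{i}\al\pi/2}$ for $r>0$, so that
\[
A(r)=\Re g(\mathrm{i}r)=r^\al\cos(\al\pi/2),\qquad B(r)=\Im g(\mathrm{i}r)=r^\al\sin(\al\pi/2).
\]
Consequently $A^2(r)+B^2(r)=r^{2\al}$ and $\left(A^2(r)+B^2(r)\right)^{1/2}=r^\al$ for every $r>0$.

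Next I would simplify the functions $K^\pm(r)$ from (\ref{Kpm}). Substituting the above values,
\[
K^\pm(r)=\frac{1}{\sqrt2}\left(r^\al\pm r^\al\cos(\al\pi/2)\right)^{1/2}=\frac{r^{\al/2}}{\sqrt2}\left(1\pm\cos(\al\pi/2)\right)^{1/2},
\]
and the half-angle identities $1+\cos\theta=2\cos^2(\theta/2)$ and $1-\cos\theta=2\sin^2(\theta/2)$ with $\theta=\al\pi/2$ give $K^+(r)=r^{\al/2}|\cos(\al\pi/4)|$ and $K^-(r)=r^{\al/2}|\sin(\al\pi/4)|$. Since $\al\in(1,2]$ forces $\al\pi/4\in(\pi/4,\pi/2]$, one has $\cos(\al\pi/4)\ge 0$ and $\sin(\al\pi/4)>0$, so the absolute values may be dropped.

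Finally I would insert $K^+(r)=r^{\al/2}\cos(\al\pi/4)$ and $K^-(r)=r^{\al/2}\sin(\al\pi/4)$ into (\ref{reshenie}), which produces (\ref{reshenie1}) verbatim; convergence of the integral and infinite differentiability in $t$ for $\al<2$ carry over unchanged from the discussion following (\ref{reshenie}), since $K^+(r)\sim r^{\al/2}\cos(\al\pi/4)$ still grows at infinity and $K^\pm(r)\sim r^{\al/2}$ near $r=0$ keeps the singularity integrable. I do not expect any genuine obstacle here: the only delicate point is the choice of sign when extracting the square roots in the half-angle reduction, and that is settled by the range of $\al$ noted above. As a sanity check, for $\al=2$ the representation degenerates to $w(x,t)=\frac12+\frac1\pi\int_0^\infty\sin\left(r(t-x)\right)\,\frac{\mathrm{d}r}{r}=H(t-x)$, in agreement with Theorem~\ref{w0}.
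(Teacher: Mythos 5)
Your proposal is correct and is exactly the computation the paper intends: the corollary is the specialization of (\ref{reshenie}) to $g(s)=s^\al$, with $A(r)=r^\al\cos(\al\pi/2)$, $B(r)=r^\al\sin(\al\pi/2)$ and the half-angle reduction giving $K^\pm(r)=r^{\al/2}\cos(\al\pi/4)$, $r^{\al/2}\sin(\al\pi/4)$ (the sign choice being justified by $\al\pi/4\in(\pi/4,\pi/2]$). Your sanity check at $\al=2$ recovering $H(t-x)$ is a nice additional confirmation consistent with Theorem~\ref{w0}.
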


Based on (\ref{reshenie1}) and (\ref{CS}), representations for the fundamental solutions $\GG_c(x,t)$ and $\GG_s(x,t)$ can be easily derived after differentiation under the integral sign. The obtained in this way representations are different from those given in the works \cite{MainardiAML, Luchkopropspeed, LuchkoMainardi}. However, a numerical check shows that the different representations give identical results.

\section{Subordination to cosine families}

Assume $A$ is a closed linear unbounded operator densely defined in a Banach space $X$. Let $A$ generates a cosine family. This means that
the second-order Cauchy problem   
\begin{equation}\label{sec}
u_{tt}(\mathbf{x},t)=Au(\mathbf{x},t), \ \ t>0;\ \ \ u(\mathbf{x},0)=v(\mathbf{x})\in X,\  u_t(\mathbf{x},0)=0,
\end{equation}
is well posed. Denote by $S_2(t)$ the cosine family generated by the operator $A$, that is the solution operator for problem (\ref{sec}). The Laplace transform of the cosine family $S_2(t)$ generated by the operator $A$ is given by 
\begin{equation}\label{LapT}
\int_0^\infty e^{-st}S_2(t)\, \mbox{d}t=s(s^2-A)^{-1}.
\end{equation}
For details on cosine families we refer to \cite{Arendt}, Section 3.14.

We are concerned with the following problem for the multi-term time-fractional equation
\begin{equation}\label{A1}
cD_t^{\al}u(\mathbf{x},t)+\sum_{j=1}^m {c_j} D_t^{\al_j}u(\mathbf{x},t)=Au(\mathbf{x},t), \ \ t>0;\ \ \ u(\mathbf{x},0)=v(\mathbf{x})\in X,\  u_t(\mathbf{x},0)=0,
\end{equation}
where the parameters $\al,\al_j,c,c_j$ satisfy restrictions (\ref{al}).

It is convenient to rewrite problem (\ref{A1}) as an abstract Volterra  integral equation and use the definitions and some results from \cite{Pruss}. 
Applying  Laplace transform we obtain from (\ref{A1}) by the use of (\ref{LapD}) the integral equation
\begin{equation}\label{V}
u(\mathbf{x},t)=v(\mathbf{x})+\int_0^t k(t-\tau)Au(\mathbf{x},\tau)\,\mathrm{d}\tau, 
\end{equation}
where the scalar kernel $k(t)$ is defined by its Laplace transform 
\begin{equation}\label{kernel}
\widehat k(s)=1/g(s)
\end{equation} 
with function $g(s)$ defined in (\ref{gs}).

We will call problem (\ref{A1}) well posed if the corresponding Volterra integral equation (\ref{V}) is well posed.  In this case the resolvent for problem (\ref{V}) coincides with the solution operator of problem (\ref{A1}). Denote by $S(t)$ this solution operator.
Applying Laplace transform in (\ref{A1}) or (\ref{V}) it follows 
\begin{equation}\label{LapS}
\int_0^\infty e^{-st}S(t)\, \mbox{d}t=\frac{g(s)}{s}(g(s)-A)^{-1},
\end{equation}
where $g(s)$ is the function defined in (\ref{gs}).

Next the subordination relation (\ref{sub0}) will be proved. 
Let us define an operator-valued function $T(t)$ as follows
\begin{equation}\label{sub00}
T(t)=\int_0^\infty \ph(t,\tau)S_2(\tau)\, \mbox{d}\tau, \ \ t>0,
\end{equation}
where $S_2(t)$ is a cosine family generated by the operator $A$ and the function $\ph(t,\tau)$ is related to the propagation function $w(x,t)$ via the identity 
\begin{equation}\label{phi}
\ph(t,\tau)=- w_x(x,t)|_{x=\tau},\ \ t,\tau >0.
\end{equation}
The defined in this way function is p.d.f. in $\tau$. Indeed, 
Theorem \ref{33} implies that $\ph(t,\tau)\ge 0$. Moreover,
$$
\int_0^\infty \ph(t,\tau)\, \mbox{d}\tau=-\int_0^\infty w_x(x,t)\, \mbox{d}x=w(t,0)-w(t,\infty)=1
$$
where the boundary conditions (\ref{bc}) are taken into account.

 Application of the Laplace transform in (\ref{sub00}) gives by the use of (\ref{dw}) and (\ref{LapT})
\begin{eqnarray}
\int _0^\infty e^{-st}T(t)\, \mbox{d}t&=&\int_0^\infty\widehat{\ph}(s,\tau)S_2(\tau)\, \mbox{d}\tau\nonumber\\
&=&\frac{\sqrt{g(s)}}{s}\int_0^\infty \exp\left({-\tau \sqrt{g(s)}}\right)S_2(\tau)\, \mbox{d}\tau\nonumber\\
&=&\frac{g(s)}{s}(g(s)-A)^{-1}.\label{111}
\end{eqnarray}
Comparing (\ref{111}) to (\ref{LapS}), it follows by the uniqueness of the Laplace transform  that $T(t)=S(t)$. 

The fact that $\ph(t,\tau)$ is a p.d.f. has the following implication: if $S_2(t)$ is a bounded cosine family, such that $\|S_2(t)\|\le M, t\ge 0,$ then the same holds for $S(t).$ Indeed, from (\ref{sub00}) it follows
\begin{equation}\label{bounded}
\|S(t)\|\le  \int_0^\infty \ph(t,\tau)\|S_2(\tau)\|\, \mbox{d}\tau\le M \int_0^\infty \ph(t,\tau)\, \mbox{d}\tau=M,\ \ t\ge 0.
\end{equation}

Now we are ready to formulate our main result.

\begin{theorem}\label{thsub}
If $A$ is a generator of a bounded 
cosine family $S_2(t)$ in $X$ then
 problem (\ref{A1}) admits a bounded solution operator $S(t)$. It is related to $S_2(t)$ via the subordination identity 
\begin{equation}\label{sub000}
S(t)=\int_0^\infty \ph(t,\tau)S_2(\tau)\, \mbox{d}\tau, \ \ t>0.
\end{equation}
The function $\ph(t,\tau)$ is a p.d.f. in $\tau$ (i.e. conditions (\ref{PD}) hold) and admits the following integral representation 
\begin{eqnarray}
\ph(t,\tau)&=&\frac 1\pi\int_0^\infty\exp\left(-\tau K^+(r)\right)\left(K^+(r)\sin\left(rt-\tau K^-(r)\right)\right.\nonumber\\
&+&\left.K^-(r)\cos \left(rt-\tau K^-(r)\right)\right)\,\frac{\mbox{d}r}{r},\ \ t,\tau>0,\label{sf}
\end{eqnarray}
where $K^\pm(r)$ are the functions defined in (\ref{Kpm}).
\end{theorem}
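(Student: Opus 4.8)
The plan is to collect the facts already assembled in the discussion preceding the theorem and then to obtain the explicit formula (\ref{sf}) by differentiating (\ref{reshenie}). By Theorem~\ref{33} the function $\ph(t,\tau)$ of (\ref{phi}) is nonnegative, and, as noted just before the theorem, the boundary condition (\ref{bc}) together with (\ref{limits}) gives $\int_0^\infty\ph(t,\tau)\,\mathrm{d}\tau=1$, so (\ref{PD}) holds. Since $S_2$ is bounded, say $\|S_2(\tau)\|\le M$, the integral in (\ref{sub00}) defining $T(t)$ is an absolutely convergent (strong) Bochner integral with $\|T(t)\|\le M$, as in (\ref{bounded}). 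Strong continuity of $t\mapsto T(t)$, and the normalization $T(0^+)=I$, follow by dominated convergence, using that $\tau\mapsto\ph(t,\tau)$ depends continuously on $t$ and that $-w_x(\cdot,t)\,\mathrm{d}x$ converges weakly to the unit point mass at $0$ as $t\to0^+$, which is a consequence of (\ref{limits}) and the monotonicity statements in Theorem~\ref{33}.

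Next I would identify $T(t)$ with the solution operator of (\ref{A1}). Taking the Laplace transform in (\ref{sub00}) is legitimate by Fubini's theorem, the integrand being dominated by $M\,\ph(t,\tau)$. Using the expression (\ref{dw}), so that $\widehat\ph(s,\tau)=(\sqrt{g(s)}/s)\exp(-\tau\sqrt{g(s)})$, together with the identity $\int_0^\infty\exp(-\tau\sqrt{g(s)})S_2(\tau)\,\mathrm{d}\tau=\sqrt{g(s)}(g(s)-A)^{-1}$, which is (\ref{LapT}) evaluated at $\sqrt{g(s)}$ — admissible because $\sqrt{g}\in\CBF$ forces $\Re\sqrt{g(s)}>0$ for $\Re s>0$ — one reproduces the computation (\ref{111}), namely $\int_0^\infty e^{-st}T(t)\,\mathrm{d}t=(g(s)/s)(g(s)-A)^{-1}$. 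This coincides with the Laplace transform (\ref{LapS}) of the resolvent of the Volterra equation (\ref{V}); by uniqueness of the Laplace transform and the characterization of resolvents through their transforms (see \cite{Pruss}, Chapters~1 and~4) it follows that problem (\ref{A1}) is well posed, that $S(t)=T(t)$, and hence that the subordination identity (\ref{sub000}) holds with $\|S(t)\|\le M$.

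Finally, for the representation (\ref{sf}) I would differentiate (\ref{reshenie}) with respect to $x$ and set $x=\tau$, recalling (\ref{phi}). Differentiation under the integral sign is justified because $K^\pm(r)>0$ with $K^\pm(r)\sim r^{\al_m/2}$ as $r\to0$ and $K^\pm(r)\sim r^{\al/2}$ as $r\to\infty$: the differentiated integrand is $O(r^{\al_m/2-1})$ near $r=0$, hence integrable, and is suppressed by the factor $\exp(-\tau K^+(r))$ at infinity, uniformly for $\tau$ in compact subsets of $(0,\infty)$. Since $\partial_x\big(\exp(-xK^+(r))\sin(rt-xK^-(r))\big)=-\exp(-xK^+(r))\big(K^+(r)\sin(rt-xK^-(r))+K^-(r)\cos(rt-xK^-(r))\big)$ and the constant term $1/2$ in (\ref{reshenie}) contributes nothing, taking minus this expression and evaluating at $x=\tau$ yields precisely (\ref{sf}); the same bounds show that the integral in (\ref{sf}) converges.

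The only genuinely delicate point is the implication ``$\widehat T=\widehat S$, hence $T=S$'': strictly speaking this requires knowing in advance that a strongly continuous solution operator (resolvent) exists at all, not merely that its Laplace transform is prescribed. I expect this to be dispatched by the generation and subordination machinery of \cite{Pruss}, Chapter~4 — the multi-term counterpart of the classical principle (\ref{subM}) — reducing the matter to the elementary Laplace-transform computation above. Everything else is bookkeeping resting on Proposition~\ref{pg} and Theorem~\ref{33}.
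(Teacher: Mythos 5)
Your proposal is correct and follows essentially the same route as the paper: the p.d.f.\ properties and the Laplace-transform computation (\ref{111}) are exactly the preparatory material the paper assembles before the theorem, the explicit formula (\ref{sf}) is obtained, as in the paper, by differentiating (\ref{reshenie}) under the integral sign, and the ``delicate point'' you flag about a priori existence of the resolvent is resolved in the paper precisely as you anticipate, by invoking Theorem~4.3(iii) of \cite{Pruss} (whose hypotheses hold because $\sqrt{g(s)}\in\BF$ and $\LL^{-1}\{1/\sqrt{g(s)}\}\sim t^{\al/2-1}$ is locally integrable).
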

\begin{proof}
The strict proof of the existence of solution operator $S(t)$ follows from Theorem~4.3 (iii) in \cite{Pruss}. The conditions of this theorem are satisfied since $\sqrt{g(s)}\in\BF$ and $\LL^{-1}\{1/\sqrt{g(s)}\}\sim t^{\al/2-1}$, as $t\to 0$, thus it is locally integrable. 

The integral representation (\ref{sf}) for the function (\ref{phi})  is obtained after easily justified differentiation under the integral sign in (\ref{reshenie}). 
\end{proof}

Plots of the p.d.f. $\ph(t,\tau)$ related to some two-term equations are shown in Figs.~4 and 5. The numerical computations are based on the integral representation (\ref{sf}).

In the case $\al=2$ identity (\ref{phi}) and Theorem \ref{w0} imply that $\ph(t,\tau)\equiv 0$ for $\tau>t/\sqrt{c}$. Therefore in this case the integral in (\ref{sub000}) is finite.

\begin{cor}
Let $\al=2$. Under the hypotheses of Theorem \ref{thsub} the subordination relation (\ref{sub000}) has the form
\begin{equation}\label{sub00f}
S(t)=\int_0^{t/\sqrt{c}} \ph(t,\tau)S_2(\tau)\, \mbox{d}\tau, \ \ t>0.
\end{equation}
 \end{cor}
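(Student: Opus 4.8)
The plan is to deduce the statement directly from Theorem~\ref{thsub} and Theorem~\ref{w0}, the only work being to show that for $\al=2$ the integrand in the subordination identity (\ref{sub000}) is supported on the bounded interval $\tau\in(0,t/\sqrt{c})$. First I would recall from Theorem~\ref{thsub} that, since $A$ generates a bounded cosine family, the solution operator $S(t)$ of problem (\ref{A1}) exists, is bounded, and satisfies
$$
S(t)=\int_0^\infty \ph(t,\tau)S_2(\tau)\,\mbox{d}\tau,\qquad t>0,
$$
where $\ph(t,\tau)$ is defined by (\ref{phi}), i.e. $\ph(t,\tau)=-w_x(x,t)|_{x=\tau}$.

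Next I would invoke Theorem~\ref{w0}: for $\al=2$ the propagation function satisfies $w(x,t)\equiv 0$ whenever $x>t/\sqrt{c}$. Fix $t>0$ and let $\tau>t/\sqrt{c}$. Then $w(\cdot,t)$ vanishes identically on the open ray $(t/\sqrt{c},\infty)$, which contains $\tau$, so $w_x(\tau,t)=0$ and hence $\ph(t,\tau)=0$. Equivalently, one may read this off the explicit factorisation $w(x,t)=w_0(x,t-\sqrt{c}x)H(t-\sqrt{c}x)$ used in the proof of Theorem~\ref{w0}, since the Heaviside factor is $0$ on $x>t/\sqrt{c}$. Consequently the tail $\int_{t/\sqrt{c}}^\infty \ph(t,\tau)S_2(\tau)\,\mbox{d}\tau$ vanishes and the subordination integral reduces to (\ref{sub00f}).

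The argument is essentially immediate once Theorems~\ref{thsub} and \ref{w0} are in hand; the only point that requires a word of care --- the ``main obstacle'', such as it is --- is the passage from ``$w\equiv 0$ on $\{x>t/\sqrt{c}\}$'' to ``$w_x=0$, hence $\ph=0$, there''. This is harmless because the vanishing is on an \emph{open} set, so at any interior point the partial derivative in $x$ is automatically zero; no extra regularity of $w$ up to the characteristic $x=t/\sqrt{c}$ is needed, and convergence of the (now proper) integral is in any case already guaranteed by boundedness of $S_2(t)$.
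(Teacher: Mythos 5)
Your argument is correct and is essentially the paper's own: the text preceding the corollary deduces $\ph(t,\tau)\equiv 0$ for $\tau>t/\sqrt{c}$ directly from identity (\ref{phi}) and Theorem~\ref{w0}, exactly as you do. Your extra remark that the vanishing of $w$ on an open ray forces $w_x=0$ there is a fair (and harmless) elaboration of the step the paper leaves implicit.
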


For $\theta\in(0,\pi)$ let us denote by $\Si(\theta)$ the sector in the complex plane
$$\Si(\theta)=\{z\in\CC\backslash \{0\},\ |\arg z|<\theta\}.$$

Taking into account the asymptotic expansions of the functions $K^\pm(r)$, it is clear that the function under the integral sign in (\ref{sf}) can be infinitely differentiated in $t$. Therefore, this should hold also for the function $\ph(t,\tau)$. In the next theorem we prove a stronger regularity property in the case $\al<2$.

\begin{theorem}\label{an}
Assume $1<\al<2$ and let 
\begin{equation}\label{theta}
\theta_0=\frac{(2-\al)\pi}{2\al}-\e,
\end{equation}
where $\e>0$ is arbitrarily small. For any $\tau>0$ the function $\ph(t,\tau)$ as a function of $t$ admits analytic extension to the sector $\Si(\theta_0)$ and is bounded on each sector $\overline{ \Si(\theta)}$, $0<\theta<\theta_0$. 
\end{theorem}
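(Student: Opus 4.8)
The plan is to prove analyticity of $\ph(t,\tau)$ in the sector $\Si(\theta_0)$ by returning to the complex Laplace inversion representation (\ref{in}) for $w(x,t)$, differentiating in $x$ to get an integral formula for $\ph(t,\tau)=-w_x(\tau,t)$, and then deforming the Bromwich contour into a Hankel-type contour that allows $t$ to range over $\Si(\theta_0)$ while keeping the integral absolutely convergent. From (\ref{soll}) and (\ref{phi}) we have
\begin{equation*}
\ph(t,\tau)=\frac{1}{2\pi\mathrm{i}}\int_{\g-\mathrm{i}\infty}^{\g+\mathrm{i}\infty}e^{st}\,\frac{\sqrt{g(s)}}{s}\exp\left(-\tau\sqrt{g(s)}\right)\,\mathrm{d}s,\ \ \g>0.
\end{equation*}
Because $\sqrt{g(s)}\in\CBF$ (Proposition~\ref{pg}), the integrand extends analytically to $\CC\backslash(-\infty,0]$, so I would rotate the contour to the boundary of a sector $|\arg s|\le\psi$ for some $\psi\in(\pi/2,\pi)$, picking up a small circular arc $D_\e$ around the origin; the contribution of $D_\e$ tends to a constant multiple of the residue-type limit (here it vanishes because $\sqrt{g(s)}/s\to 0$ as $s\to 0$ along the relevant rays, using $\al_m>0$), and the large-arc contributions vanish by the bound (\ref{estint})-type estimate since $\Re\sqrt{g(s)}\ge 0$ on $\Re s\ge 0$ and more generally $\sqrt{g(s)}$ stays in a half-plane. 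This leaves
\begin{equation*}
\ph(t,\tau)=\frac{1}{\pi}\,\Im\int_0^\infty e^{\rho e^{\mathrm{i}\psi}t}\,\frac{\sqrt{g(\rho e^{\mathrm{i}\psi})}}{\rho}\exp\left(-\tau\sqrt{g(\rho e^{\mathrm{i}\psi})}\right)e^{\mathrm{i}\psi}\,\mathrm{d}\rho,
\end{equation*}
or rather the sum of the two rays; I would then let $t$ be complex.

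The key estimates are on the asymptotics of $\sqrt{g(s)}$ along the rays $s=\rho e^{\pm\mathrm{i}\psi}$. As $\rho\to\infty$ the leading term is $\sqrt{c}\,\rho^{\al/2}e^{\mathrm{i}\al\psi/2}$, and as $\rho\to 0$ it behaves like $\rho^{\al_m/2}e^{\mathrm{i}\al_m\psi/2}$; in both regimes $\sqrt{g(s)}\in\CBF$ guarantees $\arg\sqrt{g(s)}\in(-\psi/2,\psi/2)$ actually lies in $[-\al\psi/2,\al\psi/2]$ by the homogeneity of the leading behaviour, so $\Re\sqrt{g(\rho e^{\pm\mathrm{i}\psi})}\ge c_1\rho^{\al/2}\cos(\al\psi/2)$ for large $\rho$ once $\al\psi/2<\pi/2$, i.e. $\psi<\pi/\al$. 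Meanwhile $\Re(e^{\pm\mathrm{i}\psi}t)=|t|\rho\cos(\psi\pm\arg t)$, which is negative (hence $|e^{st}|$ decays) provided $|\arg t|<\pi-\psi$. Choosing $\psi$ just below $\pi/\al$ gives exactly the half-width $\pi-\psi$ just above $\pi-\pi/\al=(\al-1)\pi/\al$... but we want $\theta_0=\frac{(2-\al)\pi}{2\al}$, so instead I would balance the two decay mechanisms: the term $\exp(-\tau\Re\sqrt{g})$ always helps for large $\rho$, but near $\rho$ of moderate size only the factor $e^{st}$ controls growth, and the sharp constraint comes from requiring that the exponential $\exp(\Re(e^{\mathrm{i}\psi}t)\rho)\exp(-\tau c_1\rho^{\al/2}\cos(\al\psi/2))$ be bounded; since $\al/2<1$, for the integral to converge at $\rho\to\infty$ we need $\Re(e^{\mathrm{i}\psi}t)<0$, i.e. $\psi+\arg t\in(\pi/2,3\pi/2)$, which combined with the constraint $\psi<\pi/\al$ on the other ray (and symmetrically $\psi>\pi/2$ needed to leave the right half-plane picture) yields $|\arg t|<\psi-\pi/2<\pi/\al-\pi/2=\frac{(2-\al)\pi}{2\al}=\theta_0+\e$. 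One then fixes $\psi=\pi/\al-\e'$ to get the stated $\theta_0$.

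With $\psi$ so chosen, for $t\in\overline{\Si(\theta)}$ with $\theta<\theta_0$ the integrand is dominated by an integrable function independent of $t$ (exponential decay at $\infty$ from $\Re(e^{\mathrm{i}\psi}t)<0$, uniformly over the closed subsector, and an $O(\rho^{\al_m/2-1})$ integrable singularity at $0$), so by Morera's theorem / differentiation under the integral sign $\ph(\cdot,\tau)$ is analytic on $\Si(\theta_0)$, and the same dominating function gives the claimed uniform boundedness on each $\overline{\Si(\theta)}$. I expect the main obstacle to be the bookkeeping in the contour rotation — specifically, verifying rigorously that the large-arc integrals vanish when the ray angle $\psi$ exceeds $\pi/2$ (so the crude bound $\Re\sqrt{g(s)}\ge0$ no longer suffices on the arc near $\arg s=\psi$) and pinning down the precise angle $\psi$ that simultaneously makes both the $e^{st}$ factor and the contour-deformation step work, which is where the exponent $\frac{(2-\al)\pi}{2\al}$ is forced. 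The behaviour as $\rho\to 0$ (governed by $\al_m$, not $\al$) is the easy end, since there $\sqrt{g}\to 0$ and only the $1/\rho$ factor matters, giving an integrable singularity because $\al_m>0$.
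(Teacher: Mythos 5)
Your proposal is correct in substance but takes a more hands-on route than the paper. The paper does not deform any contours: it invokes the standard characterization of functions analytic and bounded on a sector (Theorem 0.1 of \cite{Pruss}), which reduces the whole theorem to showing that $\widehat{\ph}(s,\tau)=\frac{\sqrt{g(s)}}{s}\exp(-\tau\sqrt{g(s)})$ extends analytically to $\Si(\pi/2+\theta_0)$ with $s\widehat{\ph}(s,\tau)$ bounded on closed subsectors; that in turn is a one-line estimate once one has $|\arg\sqrt{g(s)}|\le\frac{\al}{2}|\arg s|<\pi/2-\e\al/2$, whence $|\sqrt{g(s)}e^{-\tau\sqrt{g(s)}}|\le\rho e^{-a\rho}\le(ea)^{-1}$ with $\rho=|\sqrt{g(s)}|$ and $a=\tau\sin(\e\al/2)$. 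Your contour rotation to rays at angle $\psi<\pi/\al$ is essentially a proof of that abstract theorem specialized to the present situation; the key angle bound you use ($|\arg\sqrt{g(s)}|\le\al\psi/2$, forcing $\psi<\pi/\al$ and hence $\theta_0=\pi/\al-\pi/2=(2-\al)\pi/(2\al)$) is exactly the paper's. What the paper's route buys is that the arc bookkeeping you flag as the main obstacle disappears entirely; what yours buys is an explicit ray-integral formula for $\ph(t,\tau)$, from which the uniform bound on $\overline{\Si(\theta)}$ is read off from a $t$-independent dominating function (note the decay coming from $\Re(e^{\mathrm{i}\psi}t)<0$ is not uniform as $|t|\to0$, so you genuinely need the $\exp(-\tau c'\rho^{\al/2})$ factor there, as you observe).

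Three small repairs are needed. First, $\sqrt{g(s)}/s\sim\sqrt{c_m}\,s^{\al_m/2-1}\to\infty$ as $s\to0$, not $0$; the small-arc contribution still vanishes, but because $s\widehat{\ph}(s,\tau)=\sqrt{g(s)}\exp(-\tau\sqrt{g(s)})\to0$, i.e.\ the integrand is $O(\e^{\al_m/2-1})$ on an arc of length $O(\e)$ --- consistent with the integrable singularity $O(\rho^{\al_m/2-1})$ you correctly identify at the end. Second, the bound $|\arg\sqrt{g(s)}|\le\al|\arg s|/2$ does not follow from the $\CBF$ property alone (property (G) only gives $|\arg\sqrt{g(s)}|\le|\arg s|$); it comes from the explicit form of $g$ via $|\arg(s_1+s_2)|\le\max\{|\arg s_1|,|\arg s_2|\}$ applied to the terms $c_js^{\al_j}$, exactly as in the paper's proof. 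Third, to make the deformation rigorous you should carry it out for real $t>0$ first --- where Jordan's lemma together with the uniform bound $|\widehat{\ph}(s,\tau)|\le C/R$ on the arcs (valid because $\Re\sqrt{g(s)}\ge|\sqrt{g(s)}|\cos(\al\psi/2)>0$ for $|\arg s|\le\psi<\pi/\al$) disposes of the arcs, just as in the derivation of (\ref{reshenie}) --- and only afterwards extend $t$ into the sector by analyticity of the resulting integral; attempting the deformation with $t$ already complex would spoil the arc estimates near $\arg s=\pm\pi/2$.
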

\begin{proof}
First note that $\al>1$ implies $\theta_0<\pi/2$. It suffices to prove that for any $\tau>0$ the Laplace transform $\widehat{\ph}(s,\tau)$ of the function ${\ph}(t,\tau)$ admits analytic extension for $s\in \Si( \pi/2+\theta_0)$, such that $s\widehat{\ph}(s,\tau)$ is bounded for $s\in \overline{\Si( \pi/2+\theta)},\ 0<\theta<\theta_0$, (see e.g. \cite{Pruss}, Theorem 0.1).  

Indeed, since $\sqrt{g(s)}\in\CBF$, it can be extended analytically to $\CC\backslash (-\infty,0]$. Therefore this holds also for the function 
$$\widehat{\ph}(s,\tau)=\frac{\sqrt{g(s)}}{s}\exp(-\tau \sqrt{g(s)}).$$ 
For $s\in \overline{\Si( \pi/2+\theta)},\ \theta<\theta_0$, the definition (\ref{gs}) of $g(s)$ together with the property $|\arg(s_1+s_2)|\le \max \{|\arg s_1|,|\arg s_2|\}$ and (\ref{theta}) implies
$$
|\arg\sqrt{g(s)}|\le\frac{\al}{2} |\arg s|<\pi/2-\e\al/2.
$$
Therefore, 
$$
\left|s\widehat{\ph}(s,\tau)\right|=\left|\sqrt{g(s)}\exp\left(-\tau \sqrt{g(s)}\right)\right|\le \rho \exp\left(-\tau \rho \cos\left(\arg\sqrt{g(s)}\right)\right)\le \rho e^{-a \rho }\le (ea)^{-1},
$$
where $\rho=\left|\sqrt{g(s)}\right|$ and $a=\tau \sin(\e\al/2)>0$.
\end{proof}

\begin{theorem}\label{cor}
 Let $1<\al<2$. Under the hypotheses of Theorem \ref{thsub} the solution operator $S(t)$ of problem (\ref{A1}) admits analytic extension to the sector $\Si(\theta_0)$, where $\theta_0$ is defined in 
(\ref{theta}).
\end{theorem}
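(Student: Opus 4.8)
\medskip\noindent\textbf{Proof proposal.} The plan is to combine the subordination identity (\ref{sub000}) of Theorem~\ref{thsub} with the analyticity of the density $\ph$ established in Theorem~\ref{an}. Fix $0<\theta'<\theta_0$ and a compact set $K\subset\overline{\Si(\theta')}$ with $r_0:=\inf_{t\in K}|t|>0$ and $R_0:=\sup_{t\in K}|t|<\infty$. By Theorem~\ref{an} each function $t\mapsto\ph(t,\tau)$, $\tau>0$, is holomorphic on $\Si(\theta_0)$; moreover, rotating the Bromwich line exactly as in the derivation of (\ref{reshenie}) (legitimate since $\sqrt{g}\in\CBF$), for any $\theta\in(\theta',\theta_0)$ and any $\delta>0$ this extension is represented by
\[
\ph(t,\tau)=\frac{1}{2\pi\mathrm i}\int_{\Gamma_{\delta,\theta}}e^{st}\,\frac{\sqrt{g(s)}}{s}\,e^{-\tau\sqrt{g(s)}}\,\mathrm ds ,
\]
where $\Gamma_{\delta,\theta}$ is the contour consisting of the two rays $\{\arg s=\pm(\pi/2+\theta),\ |s|\ge\delta\}$ and the arc $\{|s|=\delta,\ |\arg s|\le\pi/2+\theta\}$. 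I would then set $\widetilde S(t):=\int_0^\infty\ph(t,\tau)S_2(\tau)\,\mathrm d\tau$ for $t\in\Si(\theta_0)$, which coincides with $S(t)$ for $t>0$ by (\ref{sub000}); the theorem follows once $\widetilde S$ is shown to be a well-defined holomorphic $\LL(X)$-valued function on $\Si(\theta_0)$. Since $\|S_2(\tau)\|\le M$, both are consequences of the single estimate
\[
\int_0^\infty\ \sup_{t\in K}|\ph(t,\tau)|\,\mathrm d\tau<\infty ,
\]
well-definedness being the absolute convergence of a Bochner integral and holomorphy following from Morera's and Fubini's theorems (interchange the integral over a triangle $\subset\Si(\theta_0)$ with the $\tau$-integral and use the holomorphy of each $\ph(\cdot,\tau)$).

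To prove this estimate I would bound the integrand along $\Gamma_{\delta,\theta}$ using two elementary facts. First, as in the proof of Theorem~\ref{an}, one has $|\arg\sqrt{g(s)}|\le\frac{\al}{2}|\arg s|\le\frac{\al}{2}(\pi/2+\theta)$ on $\Gamma_{\delta,\theta}$, and the defining relation (\ref{theta}) gives $\frac{\al}{2}(\pi/2+\theta_0)=\pi/2-\al\e/2$; hence $\theta<\theta_0$ forces
\[
\Re\sqrt{g(s)}\ \ge\ \sin(\al\e/2)\,\bigl|\sqrt{g(s)}\bigr|\ >\ 0,\qquad s\in\Gamma_{\delta,\theta} .
\]
Second, since $|\arg t|\le\theta'<\theta$, on the two rays $\Re(st)\le-\delta_\theta\,|t|\,|s|$ for some $\delta_\theta=\delta_\theta(\theta,\theta')>0$, while on the arc $|e^{st}|\le e^{\delta R_0}$. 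Carrying out the $\tau$-integration first — so that the factor $|\sqrt{g(s)}|$ cancels against $1/\Re\sqrt{g(s)}$ — one obtains
\[
\int_0^\infty\ \sup_{t\in K}|\ph(t,\tau)|\,\mathrm d\tau\ \le\ \frac{1}{2\pi\sin(\al\e/2)}\int_{\Gamma_{\delta,\theta}}\frac{\sup_{t\in K}|e^{st}|}{|s|}\,|\mathrm ds|\ \le\ \frac{1}{\pi\sin(\al\e/2)}\Bigl((\pi/2+\theta)e^{\delta R_0}+\int_\delta^\infty\frac{e^{-\delta_\theta r_0\ell}}{\ell}\,\mathrm d\ell\Bigr)<\infty ,
\]
the bound depending only on $K$, $\theta$ and $\delta$. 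This completes the argument.

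The step I expect to be the main obstacle is exactly this uniform integrability in $\tau$: Theorem~\ref{an} by itself supplies only a pointwise bound of the form $|\ph(t,\tau)|\le C_\theta/\tau$ on $\overline{\Si(\theta)}$, which is \emph{not} summable in $\tau$ — consistent with the fact that $w(\cdot,t)$ tends to $1$, not $0$, at the origin — so a naive domination of (\ref{sub000}) fails. The contour rotation above is what converts the oscillation on the imaginary axis into genuine exponential decay $e^{-\tau\Re\sqrt{g(s)}}$ in $\tau$, and it works precisely because $\frac{\al}{2}(\pi/2+\theta_0)<\pi/2$, i.e.\ for $\theta_0$ as in (\ref{theta}); the same inequality $\theta<\theta_0$ is what keeps $\Re\sqrt{g(s)}$ bounded below by a positive multiple of $|\sqrt{g(s)}|$. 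As an alternative route that bypasses the subordination integral altogether, one may argue directly on the Laplace transform: by (\ref{LapS}), $\widehat S(s)=\frac{g(s)}{s}(g(s)-A)^{-1}$, and since $A$ generates a bounded cosine family, $\la(\la^2-A)^{-1}=\int_0^\infty e^{-\la\tau}S_2(\tau)\,\mathrm d\tau$ for $\Re\la>0$ gives $\|(\mu-A)^{-1}\|\le M\bigl(|\mu|\cos(\frac12\arg\mu)\bigr)^{-1}$ for $\mu\in\CC\backslash(-\infty,0]$; on $\Si(\pi/2+\theta_0)$ one has $|\arg g(s)|\le\al|\arg s|<\pi-\al\e$, so $\widehat S$ extends holomorphically there with $\|s\widehat S(s)\|\le M/\sin(\al\e/2)$, and the vector-valued form of Theorem~0.1 in \cite{Pruss} — the same tool used for Theorem~\ref{an} — yields the analytic extension of $S(t)$ to $\Si(\theta_0)$.
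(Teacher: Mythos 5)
Your proposal is correct, and at the top level it follows the same route as the paper: combine the subordination identity (\ref{sub000}) with the analyticity of $\ph(\cdot,\tau)$ from Theorem \ref{an}. The difference is in what gets justified. The paper's proof is a two-line assertion that the integral in (\ref{sub000}) is ``absolutely and uniformly convergent on compact subsets of $\Si(\theta_0)$,'' but Theorem \ref{an} by itself only yields a bound of the form $|\ph(t,\tau)|\le C_\theta/\tau$ on $\overline{\Si(\theta)}$ (the constant $a$ in its proof is $\tau\sin(\e\al/2)$), which is not integrable in $\tau$ over $(0,\infty)$ — so, as you correctly point out, a naive domination does not close the argument. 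Your contour rotation of the Bromwich integral onto $\Gamma_{\delta,\theta}$, together with the lower bound $\Re\sqrt{g(s)}\ge\sin(\al\e/2)\,|\sqrt{g(s)}|$ coming from $\sqrt{g}\in\CBF$ and the choice (\ref{theta}) of $\theta_0$, is exactly what converts the oscillatory decay into genuine exponential decay in $\tau$ and delivers the uniform integrability $\int_0^\infty\sup_{t\in K}|\ph(t,\tau)|\,\mathrm d\tau<\infty$ that the paper implicitly needs; the subsequent Morera--Fubini step is standard. Your alternative route — extending $\widehat S(s)=\frac{g(s)}{s}(g(s)-A)^{-1}$ to $\Si(\pi/2+\theta_0)$, bounding $\|s\widehat S(s)\|\le M/\sin(\al\e/2)$ via the resolvent estimate for cosine-family generators, and invoking the vector-valued generation theorem for analytic resolvents — is also sound, bypasses the subordination integral entirely, and is the more standard mechanism behind the analogous single-term result (Theorem 3.3 in \cite{B}) that the paper cites for comparison. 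In short: same strategy as the paper, but your write-up supplies the key convergence estimate the paper leaves unproved. (Only a cosmetic remark: your arc-length constant $(\pi/2+\theta)$ should be $\pi+2\theta$, which is absorbed by the prefactor and does not affect finiteness.)
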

\begin{proof}
Since $S_2(t)$ is bounded, according to Theorem \ref{an} the function under the integral sign in (\ref{sub000}) is analytic in $t\in \Si(\theta_0)$ and the integral is absolutely and uniformly convergent on compact subsets of $\Si(\theta_0)$. Therefore, $S(t)$ given by (\ref{sub000}) is analytic in $\Si(\theta_0)$.
\end{proof}

Theorem \ref{cor} is in agreement with Theorem 3.3. in \cite{B}, where the same property is established for the solution operators $S_\al(t)$ of problem (\ref{st}).




\section{Subordination to $S_\al(t)$}
 
In fact, the solution operator $S(t)$ of problem (\ref{A1}) is not only subordinate to the cosine families $S_2(t)$, but also to the solution operator $S_\al(t)$ of problem (\ref{st}), which is a stronger result (see the remark below). The proof follows the same steps as in the case $\al=2$ above. We only need to prove the following property of $g(s)$. 

\begin{prop}\label{alal}
If $g(s)$ is defined as in (\ref{gs}) with parameters $\al,\al_j,c,c_j,j=1,...,m,$ satisfying (\ref{al}), then $g(s)^{1/\al}\in \CBF$.
\end{prop}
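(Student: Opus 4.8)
The goal is to show $g(s)^{1/\al}\in\CBF$ under conditions (\ref{al}), mirroring the structure of the proof of Proposition~\ref{pg}. The plan is to again factor out an appropriate power of $s$. Write $f(s)=g(s)/s^{\beta}$ for a suitable exponent $\beta$; then $g(s)^{1/\al}=\left(s^{\beta}\right)^{1/\al}\left(f(s)\right)^{1/\al}=s^{\beta/\al}f(s)^{1/\al}$, and I would like to apply property~(F) from the Appendix with $p+q=1/\al$. For this I need both $s^{\beta}\in\CBF$ and $f(s)\in\CBF$, and I need the exponents $\beta/\al$ and $(\text{the degree used for }f)/\al$ to lie in $[0,1]$.

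\textbf{Choice of $\beta$ and verification.} I would split into the same two cases as before. If $\al_m\ge 1$, take $\beta=\al_m$ (or even $\beta=1$, but $\al_m$ is cleaner): then $f(s)=c\,s^{\al-\al_m}+\sum_{j=1}^m c_j s^{\al_j-\al_m}$, and since $0\le\al_j-\al_m\le\al-\al_m\le 1$, each monomial $s^{\al_j-\al_m}$ and $s^{\al-\al_m}$ is in $\CBF$ (a power $s^\gamma$ with $\gamma\in[0,1]$ is a complete Bernstein function), hence $f(s)\in\CBF$ as a positive-coefficient sum; also $s^{\al_m}\in\CBF$ since $\al_m\in[1,?]$—here I must be careful: $s^\gamma\in\CBF$ only for $\gamma\in[0,1]$, so if $\al_m>1$ this fails. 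The fix is to instead always take $\beta=\al_m$ only when $\al_m\le 1$, and when $\al_m>1$ write $g(s)=s\cdot\big(c s^{\al-1}+\sum c_j s^{\al_j-1}\big)$ with all exponents $\al-1,\al_j-1\in(0,1]$, so the bracket is in $\CBF$ and $s\in\CBF$. In the case $\al_m<1$, factor $g(s)=s^{\al_m}\big(c s^{\al-\al_m}+\sum c_j s^{\al_j-\al_m}\big)$ with $\al_m\in(0,1)$ and all exponents in the bracket in $(0,1]$ by $\al-\al_m\le 1$; both factors are in $\CBF$. In either subcase $g(s)=s^{\mu}h(s)$ with $\mu\in(0,1]$ and $h\in\CBF$, so $g(s)^{1/\al}=s^{\mu/\al}h(s)^{1/\al}$ and property~(F) applies with $p=\mu/\al\le 1/\al<1$, $q=1/\al<1$, $p+q=(\mu+1)/\al\le 2/\al\le 2$—wait, I need $p+q\le 1$. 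Since $\mu\le 1$ and $\al>1$, we get $p+q=(\mu+1)/\al\le 2/\al$, which exceeds $1$ when $\al<2$.

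\textbf{Main obstacle.} This last point is exactly the delicate step: a naive application of the multiplicativity property (F) gives a combined exponent $(\mu+1)/\al$ that can be larger than $1$, so (F) as stated does not directly close the argument. I expect the real proof avoids the product decomposition and instead argues directly: since $g(s)\in\CBF$ (which follows because $g$ itself is a positive-coefficient sum of powers $s^{\al_j}$ with $\al_j\in(0,2]$—note $s^\gamma\in\CBF$ for $\gamma\in(0,2]$? no, only $\gamma\in(0,1]$, but $s^{\al}$ with $\al\in(1,2]$ is at least in $\SF^{-1}$ or handled via $g(s)/s$), the key is that $g$ maps $\CC\setminus(-\infty,0]$ into itself with $|\arg g(s)|\le\al|\arg s|$ (by the $\arg$-of-a-sum estimate used later in the proof of Theorem~\ref{an}). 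Hence $g(s)^{1/\al}$ maps the half-plane $\Re s>0$, indeed the sector $|\arg s|<\pi$, into $|\arg(g(s)^{1/\al})|\le|\arg s|<\pi$ and into the right half-plane when $|\arg s|<\pi/2$; being holomorphic on $\CC\setminus(-\infty,0]$ and mapping the upper half-plane to the upper half-plane (since $|\arg g(s)^{1/\al}|\le\al^{-1}\cdot\al\arg s=\arg s\le\pi$, preserving sign of imaginary part), it is a Pick/Nevanlinna function that is nonnegative on $(0,\infty)$, which is precisely the characterization of $\CBF$ (property~(E)/(G) in the Appendix). The hard part is therefore not the algebra but checking the $\arg$ bound $|\arg g(s)|\le\al|\arg s|$ holds for all $s$ with $|\arg s|<\pi$ — this uses $\al>\al_j$ and the max-of-args inequality $|\arg(s_1+s_2)|\le\max\{|\arg s_1|,|\arg s_2|\}$ for $s_1,s_2$ in a common half-plane — together with verifying that $g$ has no zeros off the negative axis (which is where $\al-\al_m\le 1$ re-enters, exactly as in the counterexample discussion after Proposition~\ref{pg}).
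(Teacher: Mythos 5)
You correctly diagnose that the product decomposition from Proposition~\ref{pg} cannot be recycled here: property~(F) would require $p+q\le 1$, while the combined exponent $(\mu+1)/\al$ exceeds $1$ whenever $\al<2$. That observation is sound. However, the alternative route you sketch has a genuine gap at its key step. The inequality $|\arg(s_1+s_2)|\le\max\{|\arg s_1|,|\arg s_2|\}$ is a convexity statement about sectors, and the sector $\{|\arg z|\le\phi\}$ is only convex for $\phi\le\pi/2$ (one-sided sectors $\{0\le\arg z\le\phi\}$ are convex for $\phi\le\pi$). The terms $c_js^{\al_j}$ have arguments up to $\al\arg s$, which exceeds $\pi$ as soon as $|\arg s|>\pi/\al$; since $\al>1$, this happens well inside the upper half-plane, and there the bound $|\arg g(s)|\le\al|\arg s|$ (in the principal-branch sense) and the conclusion ``preserving sign of imaginary part'' no longer follow from the stated inequality. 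One can repair this by tracking the accumulated argument through the factorization $g(s)=s^\al\bigl(c+\sum_jc_js^{\al_j-\al}\bigr)$, but that is extra work you have not done; moreover, the Pick/Nevanlinna characterization of $\CBF$ you invoke is the \emph{converse} of property~(G) and is not among the tools listed in the Appendix.

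The paper's actual proof is much shorter and avoids all of this. By the definition of $\CBF$ it suffices to show $g(s)^{1/\al}/s\in\SF$. Normalize by $s^{\al}$ (not by $s^{\al_m}$ or $s$): the function $g(s)/s^{\al}=c+\sum_{j=1}^m c_j s^{\al_j-\al}$ is a Stieltjes function, because each exponent $\al_j-\al$ lies in $[-1,0)$ thanks to $\al-\al_m\le 1$, and $\la^{\gamma-1}$ with $\gamma\in[0,1]$ is the basic example of a Stieltjes function. Since $\la^{1/\al}\in\CBF$ for $\al>1$, the first composition rule in property~(D) ($\phh\in\CBF$, $\psi\in\SF$ $\Rightarrow$ $\phh(\psi)\in\SF$) gives $\bigl(g(s)/s^{\al}\bigr)^{1/\al}=g(s)^{1/\al}/s\in\SF$, which is exactly the statement $g(s)^{1/\al}\in\CBF$. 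The moral is that the right normalization turns the condition $\al-\al_m\le1$ directly into membership in $\SF$, and the composition property~(D) replaces both the exponent bookkeeping of property~(F) and the complex-analytic argument tracking.
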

\begin{proof}
For the proof we adapt a method proposed in \cite{GLS}.
It is sufficient to show that 
\begin{equation}\label{sssf}
f(s)=\frac{g(s)^{1/\al}}{s}\in\SF.
\end{equation}
Since $0<\al-\al_m<1$ the function $$f^\al(s)=\frac{g(s)}{s^\al}=c+\sum_{j=1}^m {c_j} s^{\al_j-\al}$$  is a Stieltjes function. Moreover, ${s}^{1/\al}\in\CBF$  for $\al>1$. This together with the first composition property in (D) from the Appendix gives (\ref{sssf}). 
\end{proof}
Let us note that the property $g(s)^{1/\al}\in \CBF$ is stronger than the property $g(s)^{1/2}\in \CBF$ proven in Proposition~\ref{pg}. This follows from the representation $g(s)^{1/2}=(g(s)^{1/\al})^{\al/2}$ as a composition of two complete Bernstein functions, which by the second property in (D) from the Appendix is again a complete Bernstein function.

\begin{theorem}\label{thsubal}
Assume problem (\ref{st}) has a bounded 
solution operator $S_\al(t)$. Then
 problem (\ref{A1}) admits a bounded solution operator $S(t)$, which is related to $S_\al(t)$ by the subordination identity 
\begin{equation}\label{subal}
S(t)=\int_0^\infty \psi(t,\tau)S_\al(\tau)\, \mbox{d}\tau, \ \ t>0,
\end{equation}
 where the function $\psi(t,\tau)$ is a p.d.f. in $\tau$.
\end{theorem}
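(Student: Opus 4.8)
The plan is to repeat the argument used for Theorem~\ref{thsub}, with $S_2(t)$ replaced by $S_\al(t)$, the exponent $1/2$ by $1/\al$, and Proposition~\ref{pg} by Proposition~\ref{alal}. The first step is to identify the subordination kernel $\psi(t,\tau)$ through its Laplace transform in $t$. Applying the Laplace transform to problem~(\ref{st}) and using (\ref{LapD}) gives $\LL\{S_\al(t)\}(\la)=\la^{\al-1}(\la^\al-A)^{-1}$. Since Proposition~\ref{alal} says that $q(s):=g(s)^{1/\al}\in\CBF$, this function extends analytically to $\CC\backslash(-\infty,0]$ and satisfies $|\arg q(s)|\le|\arg s|$, so $\Re q(s)>0$ whenever $\Re s>0$; hence I may substitute $\la\mapsto q(s)$ in the resolvent representation of $\widehat{S_\al}$ to obtain
\[
\int_0^\infty \exp\!\left(-\tau q(s)\right)S_\al(\tau)\,\mathrm{d}\tau=q(s)^{\al-1}\left(q(s)^{\al}-A\right)^{-1}=g(s)^{(\al-1)/\al}\left(g(s)-A\right)^{-1}.
\]
Comparing this with (\ref{LapS}) forces the choice
\[
\widehat{\psi}(s,\tau)=\frac{g(s)^{1/\al}}{s}\exp\!\left(-\tau\,g(s)^{1/\al}\right),
\]
equivalently $\psi(t,\tau)=-v_x(x,t)\big|_{x=\tau}$, where $v(x,t)$ is the function with Laplace transform $\widehat{v}(x,s)=s^{-1}\exp(-x\,g(s)^{1/\al})$ --- the exact analogue of the propagation function $w$ with $\sqrt{g(s)}$ replaced by $g(s)^{1/\al}$.

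Next I would verify that $\psi(t,\tau)$ is a p.d.f.\ in $\tau$. Proposition~\ref{alal} in fact establishes $g(s)^{1/\al}/s\in\SF\subset\CMF$, while $g(s)^{1/\al}\in\CBF\subset\BF$ together with property~(C) of the Appendix gives $\exp(-\tau\,g(s)^{1/\al})\in\CMF$; thus $\widehat{\psi}(s,\tau)$ is a product of two completely monotone functions, hence completely monotone by property~(B), and $\psi(t,\tau)\ge 0$ follows from Bernstein's theorem exactly as in the proof of Theorem~\ref{33}. For the normalization, integrating $-v_x$ over $x$ and using $\widehat{v}(0,s)=1/s$ and $v(x,t)\to0$ as $x\to\infty$ gives $\int_0^\infty\psi(t,\tau)\,\mathrm{d}\tau=1$ (the same computation as in (\ref{limits})--(\ref{bc}); alternatively, a Tonelli interchange yields $\LL\{\int_0^\infty\psi(t,\tau)\,\mathrm{d}\tau\}(s)=\int_0^\infty\widehat{\psi}(s,\tau)\,\mathrm{d}\tau=1/s$).

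Finally, I would set $T(t)=\int_0^\infty\psi(t,\tau)S_\al(\tau)\,\mathrm{d}\tau$; since $\psi$ is a p.d.f.\ in $\tau$ and $\|S_\al(\tau)\|\le M$, this integral converges and $\|T(t)\|\le M$ for $t\ge0$, exactly as in (\ref{bounded}). Taking the Laplace transform and interchanging it with the $\tau$-integration (justified by absolute convergence) gives
\[
\LL\{T(t)\}(s)=\frac{g(s)^{1/\al}}{s}\int_0^\infty \exp\!\left(-\tau\,g(s)^{1/\al}\right)S_\al(\tau)\,\mathrm{d}\tau=\frac{g(s)}{s}\left(g(s)-A\right)^{-1}=\LL\{S(t)\}(s),
\]
and uniqueness of the Laplace transform yields $T(t)=S(t)$, which is the subordination identity (\ref{subal}). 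The existence of the bounded solution operator $S(t)$ of (\ref{A1}) is obtained as in Theorem~\ref{thsub} from the subordination machinery of \cite{Pruss}, Chapter~4: its hypotheses bear only on $\widehat{k}(s)=1/g(s)=\big(g(s)^{1/\al}\big)^{-\al}$ and on the fact that $g(s)^{1/\al}\in\BF$, so they are met by Proposition~\ref{alal} irrespective of whether one subordinates from $S_2$ or from $S_\al$. I expect the main obstacles to be bookkeeping ones: justifying the substitution $\la\mapsto g(s)^{1/\al}$ in $\widehat{S_\al}$ (which needs $\Re g(s)^{1/\al}>0$ on the right half-plane, supplied by $g(s)^{1/\al}\in\CBF$) and the two interchanges of Laplace transform and integration; the one genuinely nontrivial analytic fact, $g(s)^{1/\al}\in\CBF$, is already in hand from Proposition~\ref{alal}.
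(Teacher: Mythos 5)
Your proposal is correct and follows essentially the same route as the paper: define $\psi$ through its Laplace transform $\widehat{\psi}(s,\tau)=s^{-1}g(s)^{1/\al}\exp(-\tau g(s)^{1/\al})$, deduce nonnegativity from Proposition~\ref{alal} and Bernstein's theorem, verify the normalization and the Laplace-transform identity against (\ref{LapS}), and invoke Theorem~4.3(iii) of \cite{Pruss} for existence. The only (harmless) quibble is that the product of two completely monotone functions is covered by property~(A), not~(B), of the Appendix.
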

\begin{proof}
The strict proof follows from  Proposition \ref{alal} and Theorem~4.3 (iii) in \cite{Pruss}. Here we will give only the main steps.
The function $\psi(t,\tau)$ is defined as the inverse Laplace transform 
\begin{equation}\label{defpsiint}
\psi(t,\tau)=\frac{1}{2\pi \mathrm{i}} \int_{\g-\mathrm{i}\infty}^{\g+\mathrm{i}\infty}\frac{g(s)^{1/\al}}{s}\exp\left(st-\tau g(s)^{1/\al}\right)\,\mbox{d}s, \ \ \g,t,\tau>0.
\end{equation}
By Proposition \ref{alal} and Bernstein theorem $\psi(t,\tau)\ge 0$. Moreover,
\begin{eqnarray}
\int_0^\infty \psi(t,\tau)\mbox{d}\tau&=&\frac{1}{2\pi \mathrm{i}} \int_{\g-\mathrm{i}\infty}^{\g+\mathrm{i}\infty}e^{st}\frac{g(s)^{1/\al}}{s}\int_0^\infty\exp\left(-\tau g(s)^{1/\al}\right)\,\mbox{d}\tau\mbox{d}s\nonumber\\
&=&\frac{1}{2\pi \mathrm{i}} \int_{\g-\mathrm{i}\infty}^{\g+\mathrm{i}\infty}\frac{e^{st}}{s}\mbox{d}s=1.\nonumber
\end{eqnarray} 
 Therefore, $\psi(t,\tau)$ is a p.d.f. in $\tau$ and the boundedness of $S(t)$ follows easily from the boundedness of $S_\al(t)$ as in the previous section.

To prove that the operator $S(t)$ defined in (\ref{subal}) is the solution operator of problem (\ref{A1}), we need to check (\ref{LapS}). Indeed, since (see e.g. \cite{B})
\begin{equation}\label{LapSal}
\int_0^\infty e^{-st}S_\al(t)\, \mbox{d}t=s^{\al-1}(s^\al-A)^{-1},
\end{equation}
then (\ref{defpsiint}) and (\ref{LapSal}) imply
\begin{eqnarray}
\int _0^\infty e^{-st}S(t)\, \mbox{d}t&=&\int_0^\infty\widehat{\psi}(s,\tau)S_\al(\tau)\, \mbox{d}\tau\nonumber\\
&=&\frac{g(s)^{1/\al}}{s}\int_0^\infty \exp\left({-\tau g(s)^{1/\al}}\right)S_\al(\tau)\, \mbox{d}\tau\nonumber\\
&=&\frac{g(s)}{s}(g(s)-A)^{-1}.\nonumber
\end{eqnarray}
\end{proof}

Theorem~\ref{thsubal} implies that the solution operator $S(t)$ has (at least) the same regularity as $S_\al(t)$. This result is in agreement with Theorem~3.4 in \cite{KosticFCAA}.

 \section{Examples}

Two simple examples of application of Theorem~\ref{thsub} are given in this section. Let us note that the function $\phi(t,\tau)$ in the subordination identity (\ref{sub000}) does not depend on the operator $A$.

First, let $X=L^p(\RR), 1\le p<\infty$. Define the operator $A$ by means of $(Au)(x)=u''(x)$, with domain $D(A)=\{u\in X: u', u''\in X, \ u(\pm\infty)=0 \}$. Then $A$ generates a bounded cosine family given by the d'Alembert formula
\begin{equation}\label{Dal}
(S_2(t)v)(x)=\frac12\left(v(x+t)+v(x-t)\right).
\end{equation}
Inserting (\ref{Dal}) in the subordination formula (\ref{sub000}) we obtain for the solution of problem (\ref{A1}) 
\begin{equation}\label{Ex1}
u(x,t)=(S(t)v)(x)=\int_0^\infty \ph(t,\tau)(S_2(\tau)v)(x)\, \mbox{d}\tau= \frac12\int_{-\infty}^\infty \ph(t,|\xi|)v(x-\xi)\, \mbox{d}\xi.
\end{equation}
In this way the representation (\ref{Gphi}) of the fundamental solution of the spatially one-dimensional Cauchy problem is 
established. It is remarkable that, due to the specific form of the d'Alembert formula (\ref{Dal}), the convolution in time in subordination relation (\ref{sub000}) is transformed to a convolution relation for the space variable in (\ref{Ex1}).

For the second example, assume $\Om\subset\RR^n$ is an open set and let $X=L^2(\Om)$. Let $A$ be the Laplace operator with Dirichlet boundary conditions: $A=\De_{\mathbf{x}}$, $D(A)= H_0^1(\Omega) \cap H^2(\Omega)$. It is known that the operator $A$ generates a bounded cosine family, see e.g \cite{Arendt}, Section 7.2.

If $\{-\la_n,\phh_n\}_{n=1}^\infty$ is the eigensystem of the operator $A$, then $0<\la_1\le\la_2\le...,\ \la_n\to\infty$ as $n\to\infty$, and $ \{\phh_n\}_{n=1}^\infty$ form an orthonormal basis of $L^2(\Om)$.
The cosine family $S_2(t)$ admits the following eigenfunction decomposition
\begin{equation}\label{opTTT}
S_2(t)v=\sum_{n=1}^\infty v_n \cos(\sqrt{\la_n} t)\phh_n,
\end{equation} 
with 
$
v_n=(v,\phh_n),
$ 
where $(.,.)$ is the inner product in $L^2(\Om)$

Therefore, applying Theorem \ref{thsub} we obtain
the solution of problem (\ref{A1}) in the form:
\begin{equation}\label{FS}
S(t)v=\sum_{n=1}^\infty v_n u_n(t)\phh_n,
\end{equation}
where the eigenmodes $u_n(t)$ admit the integral representation
 \begin{equation}\label{Sncos}
u_n(t)=\int_0^\infty \phi(t,\tau)\cos(\sqrt{\la_n} \tau)\, \mbox{d}\tau.
\end{equation} 
The eigenmodes $u_n(t)$ can be numerically computed by the use of (\ref{Sncos}) and (\ref{sf}). 

In particular, in the one-dimensional case, $\Om=(0,1)$, the eigensystem is $\la_n=n^2\pi^2$,  $\phh_n=\sqrt{2}\sin(n\pi x)$, $n=1,2,...$.  In Fig.~6 we present plots of the first four eigenmodes $u_n(t)$ for the two-term one-dimensional equation, which are computed using formula (\ref{Sncos}).

\section{Conclusions}

 Subordination principle is derived for the fractional evolution equation with a discrete distribution of Caputo time-derivatives such that the largest and the smallest orders, $\al$ and $\al_m$,  satisfy the conditions $1<\al\le 2$ and $\al-\al_m\le 1$. The subordination identity splits the solution into two parts. The first part (the p.d.f.) depends only on the parameters of the distribution of time-fractional derivatives and the second part is the cosine family generated by the spatial operator $A$. The probability density function is closely related to the fundamental solution of the corresponding one-dimensional Cauchy problem. An explicit representation of this function is given and its regularity is studied and applied to obtain regularity results for the solution of the general problem.

An interesting phenomenon is established in the case $\al=2$ and at least one more time-derivative of noninteger order: coexistence of finite wave speed and absence of wave front. This is a memory effect, not observed in linear integer-order differential equations.

The proofs in this work are essentially based on the fact that the function $\sqrt{g(s)}$ is a Bernstein function, which is ensured by the assumption $\al-\al_m\le 1$. Whether and to what extent this condition can be relaxed 
is, to the best knowledge of the authors, an open problem. 

The obtained results can be generalized to the case when the orders of fractional derivatives are continuously distributed over an interval $(a,b)\subset(0,2]$ with $b-a\le 1$.

\section*{Appendix}

Here we list definitions and some properties of special classes of functions related to Bernstein functions. 

A function $\phh:(0,\infty)\to \RR$ is said to be completely monotone function ($\phh\in \CMF$)  if it is of class $C^\infty$ and
\begin{equation}\label{CMF}
(-1)^n \phh^{(n)}(\la)\ge 0,\ \ \la>0, n=0,1,2,...
\end{equation}
The function $\la^{\al-1}, \al\in[0,1]$, and the exponential function $\exp(-a\la), \ a>0,$ 
are basic examples of completely monotone functions.

The characterization of the class $\CMF$ is given by the Bernstein's theorem (see e.g. \cite{Feller}) which states that 
a function is completely monotone if and only if it can be represented as the Laplace transform of a non-negative measure (non-negative function or generalized function).

The class of Stieltjes functions ($\SF$) consists of all functions defined on $(0,\infty)$ which can be written as a restriction of the Laplace transform of a completely monotone function to the real positive semi-axis.
Obviously, $\SF \subset\CMF$. The function $\la^{\al-1}, \al\in[0,1]$, is a basic example of Stieltjes function.

A non-negative function $\phh$ on $(0,\infty)$ is said to be a Bernstein function ($\phh\in \BF$) if 
$\phh'(\la)\in \CMF$; $\phh(\la)$ is said to be a complete Bernstein functions ($\CBF$) if and only if $\phh(\la)/\la\in \SF$. 
We have the inclusion $\CBF\subset \BF.$ The function $\la^{\al}, \al\in[0,1]$, is a basic example of a complete Bernstein function.

 A selection of properties is listed next:\\
{\rm(A)} 
The class $\CMF$ is closed under point-wise addition and multiplication.\\
{\rm(B)} If $\phh\in \BF$ then $\phh(\la)/\la\in \CMF$.\\
{\rm(C)} If $\phh\in \CMF$ and $\psi\in \BF$ then the composite function $\phh(\psi)\in\CMF$.\\
{\rm(D)} Let $\phh\in \CBF$. If $\psi\in \SF$ then $\phh(\psi)\in\SF$; if $\psi\in \CBF$ then $\phh(\psi)\in\CBF$.\\
{\rm(E)} For not identically vanishing functions $\phh$ and $1/\phh$: $\phh\in \CBF$ if and only if $1/\phh\in\SF$.
\\
{\rm(F)} Let $p,q\in(0,1)$ and $p+q\le 1$. Then $\phh_1^p.\phh_2^q\in \CBF$ for all $\phh_1,\phh_2\in \CBF$. 
\\
{\rm(G)} If $\phh\in\SF$ or $\phh\in\CBF$ then it can be analytically extended to $\CC\backslash (-\infty,0]$ and $$|\arg \phh(z)|\le |\arg z|,\ z\in \CC\backslash (-\infty,0].$$
For proofs and more details on these special classes of functions we refer to \cite{CMF}, see also  \cite{GLS}.

\section*{Acknowledgments}

This work is partially supported by Bulgarian National Science Fund (Grant DFNI-I02/9); and performed in the frames of the bilateral research project between Bulgarian and Serbian academies of sciences "Analytical and numerical methods for differential and integral equations and mathematical models of arbitrary (fractional or high integer) order".


\bibliographystyle{model1-num-names}
\bibliography{paperJCAM}

\begin{thebibliography}{43}
\expandafter\ifx\csname natexlab\endcsname\relax\def\natexlab#1{#1}\fi
\providecommand{\bibinfo}[2]{#2}
\ifx\xfnm\relax \def\xfnm[#1]{\unskip,\space#1}\fi
\bibitem[{Mainardi(1996)}]{MainardiAML}
\bibinfo{author}{F.~Mainardi},
\newblock \bibinfo{title}{The fundamental solutions for the fractional
  diffusion-wave equation},
\newblock \bibinfo{journal}{Appl. Math. Lett.} \bibinfo{volume}{9}
  (\bibinfo{year}{1996}) \bibinfo{pages}{23--28}.
\bibitem[{Mainardi and Paradisi(2001)}]{MainardiFDW}
\bibinfo{author}{F.~Mainardi}, \bibinfo{author}{P.~Paradisi},
\newblock \bibinfo{title}{Fractional diffusive waves},
\newblock \bibinfo{journal}{Journal of Computational Acoustics,}
  \bibinfo{volume}{9} (\bibinfo{year}{2001}) \bibinfo{pages}{1417--1436}.
\bibitem[{Mainardi(2010)}]{M}
\bibinfo{author}{F.~Mainardi}, \bibinfo{title}{Fractional Calculus and Waves in
  Linear Viscoelasticity}, \bibinfo{publisher}{Imperial College Press},
  \bibinfo{address}{London}, \bibinfo{year}{2010}.
\bibitem[{Atanackovi\'{c} et~al.(2014)Atanackovi\'{c}, Pilipovi\'{c},
  Stankovi\'{c}, and Zorica}]{A}
\bibinfo{author}{T.~Atanackovi\'{c}}, \bibinfo{author}{S.~Pilipovi\'{c}},
  \bibinfo{author}{B.~Stankovi\'{c}}, \bibinfo{author}{D.~Zorica},
  \bibinfo{title}{Fractional Calculus with Applications in Mechanics:
  Vibrations and Diffusion Processes}, \bibinfo{publisher}{John Wiley \& Sons},
  \bibinfo{address}{London}, \bibinfo{year}{2014}.
\bibitem[{Luchko et~al.(2013)Luchko, Mainardi, and Povstenko}]{Luchkopropspeed}
\bibinfo{author}{Y.~Luchko}, \bibinfo{author}{F.~Mainardi},
  \bibinfo{author}{Y.~Povstenko},
\newblock \bibinfo{title}{Propagation speed of the maximum of the fundamental
  solution to the fractional diffusion-wave equation},
\newblock \bibinfo{journal}{Comput. Math. Appl.} \bibinfo{volume}{66}
  (\bibinfo{year}{2013}) \bibinfo{pages}{774--784}.
\bibitem[{Luchko and Mainardi(2014)}]{LuchkoMainardi}
\bibinfo{author}{Y.~Luchko}, \bibinfo{author}{F.~Mainardi},
\newblock \bibinfo{title}{Propagation speed of the maximum of the fundamental
  solution to the fractional diffusion-wave equation},
\newblock \bibinfo{journal}{ASME Journal of Vibration and Acoustics}
  \bibinfo{volume}{136} (\bibinfo{year}{2014}) \bibinfo{pages}{051008}.
\bibitem[{Meerschaert et~al.(2015)Meerschaert, Schilling, and Sikorskii}]{sub2}
\bibinfo{author}{M.~Meerschaert}, \bibinfo{author}{R.~Schilling},
  \bibinfo{author}{A.~Sikorskii},
\newblock \bibinfo{title}{Stochastic solutions for fractional wave equations},
\newblock \bibinfo{journal}{Nonlinear Dynam.} \bibinfo{volume}{80}
  (\bibinfo{year}{2015}) \bibinfo{pages}{1685--1695}.
\bibitem[{Duck(1990)}]{Duck}
\bibinfo{author}{F.~Duck}, \bibinfo{title}{Physical Properties of Tissue: A
  Comprehensive Reference Book}, \bibinfo{publisher}{Academic Press},
  \bibinfo{address}{Boston}, \bibinfo{year}{1990}.
\bibitem[{Atanackovi\'c et~al.(2009{\natexlab{a}})Atanackovi\'c, Pilipovi\'c,
  and Zorica}]{A1}
\bibinfo{author}{T.~Atanackovi\'c}, \bibinfo{author}{S.~Pilipovi\'c},
  \bibinfo{author}{D.~Zorica},
\newblock \bibinfo{title}{Time distributed-order diffusion-wave equation. {I}.
  {V}olterra-type equation},
\newblock \bibinfo{journal}{Proc. R. Soc. A} \bibinfo{volume}{465}
  (\bibinfo{year}{2009}{\natexlab{a}}) \bibinfo{pages}{1869--1891}.
\bibitem[{Atanackovi\'c et~al.(2009{\natexlab{b}})Atanackovi\'c, Pilipovi\'c,
  and Zorica}]{A2}
\bibinfo{author}{T.~Atanackovi\'c}, \bibinfo{author}{S.~Pilipovi\'c},
  \bibinfo{author}{D.~Zorica},
\newblock \bibinfo{title}{Time distributed-order diffusion-wave equation. {
  II}. {A}pplications of {L}aplace and {F}ourier transformations},
\newblock \bibinfo{journal}{Proc. R. Soc. A} \bibinfo{volume}{465}
  (\bibinfo{year}{2009}{\natexlab{b}}) \bibinfo{pages}{1893--1917}.
\bibitem[{Sandev et~al.(2017)Sandev, Sokolov, Metzler, and Chechkin}]{Trifce}
\bibinfo{author}{T.~Sandev}, \bibinfo{author}{I.~M. Sokolov},
  \bibinfo{author}{R.~Metzler}, \bibinfo{author}{A.~Chechkin},
\newblock \bibinfo{title}{Beyond monofractional kinetics},
\newblock \bibinfo{journal}{Chaos, Solitons \& Fractals} \bibinfo{volume}{102}
  (\bibinfo{year}{2017}) \bibinfo{pages}{210--217}.
\bibitem[{Chen and Holm(2004)}]{FractLap}
\bibinfo{author}{W.~Chen}, \bibinfo{author}{S.~Holm},
\newblock \bibinfo{title}{Fractional {L}aplacian time-space models for linear
  and nonlinear lossy media exhibiting arbitrary frequency power-law
  dependency},
\newblock \bibinfo{journal}{J. Acoust. Soc. Am.} \bibinfo{volume}{115}
  (\bibinfo{year}{2004}) \bibinfo{pages}{1424--1430}.
\bibitem[{Treeby and Cox(2010)}]{FractLap1}
\bibinfo{author}{B.~E. Treeby}, \bibinfo{author}{B.~T. Cox},
\newblock \bibinfo{title}{Modeling power law absorption and dispersion for
  acoustic propagation using the fractional {L}aplacian},
\newblock \bibinfo{journal}{J. Acoust. Soc. Am.} \bibinfo{volume}{127}
  (\bibinfo{year}{2010}) \bibinfo{pages}{2741--2748}.
\bibitem[{Sakamoto and Yamamoto(2012)}]{SaYa}
\bibinfo{author}{K.~Sakamoto}, \bibinfo{author}{M.~Yamamoto},
\newblock \bibinfo{title}{Initial value/boundary value problems for fractional
  diffusionwave equations and applications to some inverse problems},
\newblock \bibinfo{journal}{J. Math. Anal. Appl.} \bibinfo{volume}{382}
  (\bibinfo{year}{2012}) \bibinfo{pages}{426--447}.
\bibitem[{Bajlekova(2001)}]{B}
\bibinfo{author}{E.~Bajlekova}, \bibinfo{title}{Fractional Evolution Equations
  in Banach Spaces}, Ph.D. thesis, \bibinfo{address}{Eindhoven, The
  Netherlands}, \bibinfo{year}{2001}.
\bibitem[{Li et~al.(2012)Li, Kosti\'c, Li, and Piskarev}]{KosticFCAA}
\bibinfo{author}{C.-G. Li}, \bibinfo{author}{M.~Kosti\'c},
  \bibinfo{author}{M.~Li}, \bibinfo{author}{S.~Piskarev},
\newblock \bibinfo{title}{On a class of time-fractional differential
  equations},
\newblock \bibinfo{journal}{Fract. Calc. Appl. Anal.} \bibinfo{volume}{15}
  (\bibinfo{year}{2012}) \bibinfo{pages}{639--668}.
\bibitem[{Arendt et~al.(2011)Arendt, Batty, Hieber, and Neubrander}]{Arendt}
\bibinfo{author}{W.~Arendt}, \bibinfo{author}{C.~Batty},
  \bibinfo{author}{M.~Hieber}, \bibinfo{author}{F.~Neubrander},
  \bibinfo{title}{Vector-valued {L}aplace transforms and {C}auchy problems},
  \bibinfo{publisher}{Birkh{\"a}user}, \bibinfo{address}{Basel},
  \bibinfo{year}{2011}.
\bibitem[{Bazhlekova(2000)}]{Subordination0}
\bibinfo{author}{E.~Bazhlekova},
\newblock \bibinfo{title}{Subordination principle for fractional evolution
  equations},
\newblock \bibinfo{journal}{Fract. Calc. Appl. Anal.} \bibinfo{volume}{3}
  (\bibinfo{year}{2000}) \bibinfo{pages}{213--230}.
\bibitem[{Pr{\"u}ss(1993)}]{Pruss}
\bibinfo{author}{J.~Pr{\"u}ss}, \bibinfo{title}{Evolutionary {I}ntegral
  {E}quations and {A}pplications}, \bibinfo{publisher}{Birkh{\"a}user},
  \bibinfo{address}{Basel}, \bibinfo{year}{1993}.
\bibitem[{Kochubei(2014)}]{sub1}
\bibinfo{author}{A.~Kochubei},
\newblock \bibinfo{title}{Asymptotic properties of solutions of the fractional
  diffusion-wave equation},
\newblock \bibinfo{journal}{Fract. Calc. Appl. Anal.} \bibinfo{volume}{17}
  (\bibinfo{year}{2014}) \bibinfo{pages}{881--896}.
\bibitem[{Keyantuo et~al.(2016)Keyantuo, Lizama, and Warma}]{KeLiWa}
\bibinfo{author}{V.~Keyantuo}, \bibinfo{author}{C.~Lizama},
  \bibinfo{author}{M.~Warma},
\newblock \bibinfo{title}{Existence, regularity and representation of solutions
  of time fractional diffusion equations},
\newblock \bibinfo{journal}{Adv. Differential Equations} \bibinfo{volume}{21}
  (\bibinfo{year}{2016}) \bibinfo{pages}{837--886}.
\bibitem[{Miller and Yamamoto(2013)}]{sub0}
\bibinfo{author}{L.~Miller}, \bibinfo{author}{M.~Yamamoto},
\newblock \bibinfo{title}{Coefficient inverse problem for a fractional
  diffusion equation},
\newblock \bibinfo{journal}{Inverse Probl.} \bibinfo{volume}{29}
  (\bibinfo{year}{2013}) \bibinfo{pages}{075013}.
\bibitem[{Alvarez-Pardo and Lizama(2014)}]{APLizama}
\bibinfo{author}{E.~Alvarez-Pardo}, \bibinfo{author}{C.~Lizama},
\newblock \bibinfo{title}{Mild solutions for multi-term time-fractional
  differential equations with nonlocal initial conditions},
\newblock \bibinfo{journal}{Electron. J. Diff. Eqns.} \bibinfo{volume}{39}
  (\bibinfo{year}{2014}) \bibinfo{pages}{1--10}.
\bibitem[{Kochubei(2008)}]{K}
\bibinfo{author}{A.~Kochubei},
\newblock \bibinfo{title}{Distributed order calculus and equations of ultraslow
  diffusion},
\newblock \bibinfo{journal}{J. Math. Anal. Appl.} \bibinfo{volume}{340}
  (\bibinfo{year}{2008}) \bibinfo{pages}{252--281}.
\bibitem[{Meerschaert et~al.(2011)Meerschaert, Nane, and Vellaisamy}]{Meer}
\bibinfo{author}{M.~Meerschaert}, \bibinfo{author}{E.~Nane},
  \bibinfo{author}{P.~Vellaisamy},
\newblock \bibinfo{title}{Distributed-order fractional diffusions on bounded
  domains},
\newblock \bibinfo{journal}{J. Math. Anal. Appl.} \bibinfo{volume}{379}
  (\bibinfo{year}{2011}) \bibinfo{pages}{216--228}.
\bibitem[{Bazhlekova(2015)}]{ITSF}
\bibinfo{author}{E.~Bazhlekova},
\newblock \bibinfo{title}{Completely monotone functions and some classes of
  fractional evolution equations},
\newblock \bibinfo{journal}{Integr. Transf. Spec. F.} \bibinfo{volume}{26}
  (\bibinfo{year}{2015}) \bibinfo{pages}{737--752}.
\bibitem[{Mijena and Nane(2014)}]{Mijena}
\bibinfo{author}{J.~Mijena}, \bibinfo{author}{E.~Nane},
\newblock \bibinfo{title}{Strong analytic solutions of fractional {C}auchy
  problems},
\newblock \bibinfo{journal}{Proceedings of the American Mathematical Society}
  \bibinfo{volume}{142} (\bibinfo{year}{2014}) \bibinfo{pages}{1717--1731}.
\bibitem[{Abadias and Miana(2015)}]{Abadias}
\bibinfo{author}{L.~Abadias}, \bibinfo{author}{P.~J. Miana},
\newblock \bibinfo{title}{A subordination principle on {W}right functions and
  regularized resolvent families},
\newblock \bibinfo{journal}{Journal of Function Spaces}  (\bibinfo{year}{2015})
  \bibinfo{pages}{158145}.
\bibitem[{Feller(1971)}]{Feller}
\bibinfo{author}{W.~Feller}, \bibinfo{title}{An introduction to probability
  theory and its applications}, volume~\bibinfo{volume}{2},
  \bibinfo{publisher}{Wiley}, \bibinfo{address}{New York},
  \bibinfo{year}{1971}.
\bibitem[{Atanackovi\'c et~al.(2007)Atanackovi\'c, Pilipovi\'c, and
  Zorica}]{At2term}
\bibinfo{author}{T.~Atanackovi\'c}, \bibinfo{author}{S.~Pilipovi\'c},
  \bibinfo{author}{D.~Zorica},
\newblock \bibinfo{title}{Diffusion wave equation with two fractional
  derivatives of different order},
\newblock \bibinfo{journal}{Journal of Physics A: Mathematical and Theoretical}
  \bibinfo{volume}{40} (\bibinfo{year}{2007}) \bibinfo{pages}{5319--5333}.
\bibitem[{Orsingher and Beghin(2004)}]{OB}
\bibinfo{author}{E.~Orsingher}, \bibinfo{author}{L.~Beghin},
\newblock \bibinfo{title}{Time-fractional telegraph equations and telegraph
  processes with brownian time},
\newblock \bibinfo{journal}{Probab. Theory Relat. Fields} \bibinfo{volume}{128}
  (\bibinfo{year}{2004}) \bibinfo{pages}{141--160}.
\bibitem[{Mamchuev(2017)}]{Mamchuev}
\bibinfo{author}{M.~O. Mamchuev},
\newblock \bibinfo{title}{Solutions of the main boundary value problems for the
  time-fractional telegraph equation by the green function method},
\newblock \bibinfo{journal}{Fract. Calc. Appl. Anal.} \bibinfo{volume}{20}
  (\bibinfo{year}{2017}) \bibinfo{pages}{190--211}.
\bibitem[{Qi and Guo(2014)}]{Cattaneo}
\bibinfo{author}{H.~Qi}, \bibinfo{author}{X.~Guo},
\newblock \bibinfo{title}{Transient fractional heat conduction with generalized
  cattaneo model},
\newblock \bibinfo{journal}{International Journal of Heat and Mass Transfer}
  \bibinfo{volume}{76} (\bibinfo{year}{2014}) \bibinfo{pages}{535--539}.
\bibitem[{J.Chen et~al.(2012)J.Chen, Li, Anh, Shen, Liu, and Liao}]{damping}
\bibinfo{author}{J.Chen}, \bibinfo{author}{F.~Li}, \bibinfo{author}{V.~Anh},
  \bibinfo{author}{S.~Shen}, \bibinfo{author}{Q.~Liu},
  \bibinfo{author}{C.~Liao},
\newblock \bibinfo{title}{The analytical solution and numerical solution of the
  fractional diffusion-wave equation with damping},
\newblock \bibinfo{journal}{Appl. Math. Comput.} \bibinfo{volume}{219}
  (\bibinfo{year}{2012}) \bibinfo{pages}{1737--1748}.
\bibitem[{Qi et~al.(2013)Qi, Xu, and Guo}]{Qi}
\bibinfo{author}{H.-T. Qi}, \bibinfo{author}{H.-Y. Xu}, \bibinfo{author}{X.-W.
  Guo},
\newblock \bibinfo{title}{The {C}attaneo-type time fractional heat conduction
  equation for laser heating},
\newblock \bibinfo{journal}{Comput. Math. Appl.} \bibinfo{volume}{66}
  (\bibinfo{year}{2013}) \bibinfo{pages}{824--831}.
\bibitem[{Jiang et~al.(2012)Jiang, Liu, Turner, and Burrage}]{JLTB}
\bibinfo{author}{H.~Jiang}, \bibinfo{author}{F.~Liu},
  \bibinfo{author}{I.~Turner}, \bibinfo{author}{K.~Burrage},
\newblock \bibinfo{title}{Analytical solutions for the multi-term
  time-fractional diffusion-wave/diffusion equations in a finite domain},
\newblock \bibinfo{journal}{Comput. Math. Appl.} \bibinfo{volume}{64}
  (\bibinfo{year}{2012}) \bibinfo{pages}{3377–--3388}.
\bibitem[{Dehghan et~al.(2015)Dehghan, Safarpoor, and Abbaszadeh}]{DehghanJCAM}
\bibinfo{author}{M.~Dehghan}, \bibinfo{author}{M.~Safarpoor},
  \bibinfo{author}{M.~Abbaszadeh},
\newblock \bibinfo{title}{Two high-order numerical algorithms for solving the
  multi-term time fractional diffusion-wave equations},
\newblock \bibinfo{journal}{J. Comput. Appl. Math.} \bibinfo{volume}{290}
  (\bibinfo{year}{2015}) \bibinfo{pages}{174--195}.
\bibitem[{Lizama(2012)}]{L13}
\bibinfo{author}{C.~Lizama},
\newblock \bibinfo{title}{Solutions of two-term time fractional order
  differential equations with nonlocal initial conditions},
\newblock \bibinfo{journal}{Electron. J. Qual. Theory Differ. Equat.}
  \bibinfo{volume}{82} (\bibinfo{year}{2012}) \bibinfo{pages}{1--9}.
\bibitem[{Keyantuo et~al.(2013)Keyantuo, Lizama, and Warma}]{KLW10}
\bibinfo{author}{V.~Keyantuo}, \bibinfo{author}{C.~Lizama},
  \bibinfo{author}{M.~Warma},
\newblock \bibinfo{title}{Asymptotic behavior of fractional order semilinear
  evolution equations},
\newblock \bibinfo{journal}{Differential and Integral Equations}
  \bibinfo{volume}{26} (\bibinfo{year}{2013}) \bibinfo{pages}{757--780}.
\bibitem[{Schilling et~al.(2010)Schilling, Song, and Vondra\v{c}ek}]{CMF}
\bibinfo{author}{R.~Schilling}, \bibinfo{author}{R.~Song},
  \bibinfo{author}{Z.~Vondra\v{c}ek}, \bibinfo{title}{Bernstein functions:
  Theory and applications}, \bibinfo{publisher}{De Gruyter},
  \bibinfo{address}{Berlin}, \bibinfo{year}{2010}.
\bibitem[{Gorenflo et~al.(2013)Gorenflo, Luchko, and Stojanovi\'c}]{GLS}
\bibinfo{author}{R.~Gorenflo}, \bibinfo{author}{Y.~Luchko},
  \bibinfo{author}{M.~Stojanovi\'c},
\newblock \bibinfo{title}{Fundamental solution of a distributed order
  time-fractional diffusion-wave equation as probability density},
\newblock \bibinfo{journal}{Fract. Calc. Appl. Anal.} \bibinfo{volume}{16}
  (\bibinfo{year}{2013}) \bibinfo{pages}{297--316}.
\bibitem[{Bazhlekova and Bazhlekov(2017)}]{CAMWA}
\bibinfo{author}{E.~Bazhlekova}, \bibinfo{author}{I.~Bazhlekov},
\newblock \bibinfo{title}{Unidirectional flows of fractional {J}effreys’
  fluids: {T}hermodynamic constraints and subordination},
\newblock \bibinfo{journal}{Comput. Math. Appl.} \bibinfo{volume}{73}
  (\bibinfo{year}{2017}) \bibinfo{pages}{1363--1376}.
\bibitem[{Ditkin and Prudnikov(1965)}]{Ditkin}
\bibinfo{author}{V.~A. Ditkin}, \bibinfo{author}{A.~P. Prudnikov},
  \bibinfo{title}{Integral transforms and operational calculus},
  \bibinfo{publisher}{Pergamon Press}, \bibinfo{address}{Oxford, New York},
  \bibinfo{year}{1965}.

\end{thebibliography}


\begin{figure}[h]
		\includegraphics[width=0.8\textwidth]{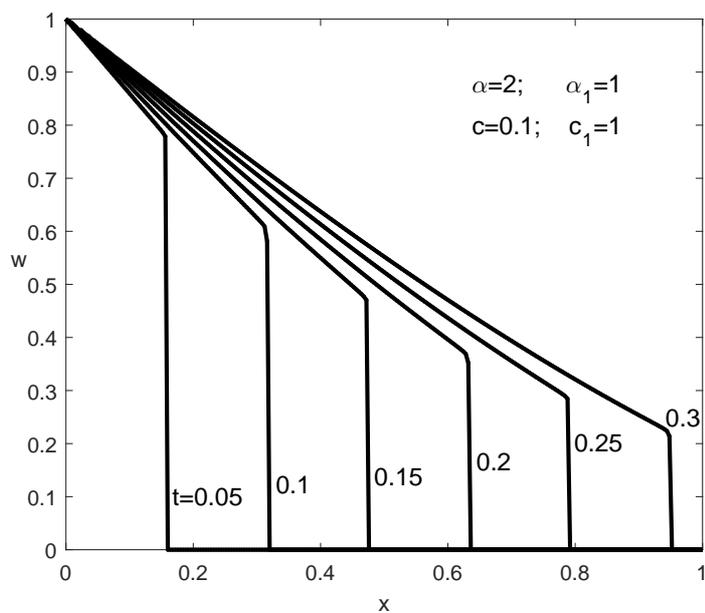}
	\caption{Propagation function $w(x,t)$ for the classical telegraph equation 
	as a function of $x$ 
	for different values of $t$.
	}	
	\end{figure}
	
	\begin{figure}[h]
		\includegraphics[width=0.8\textwidth]{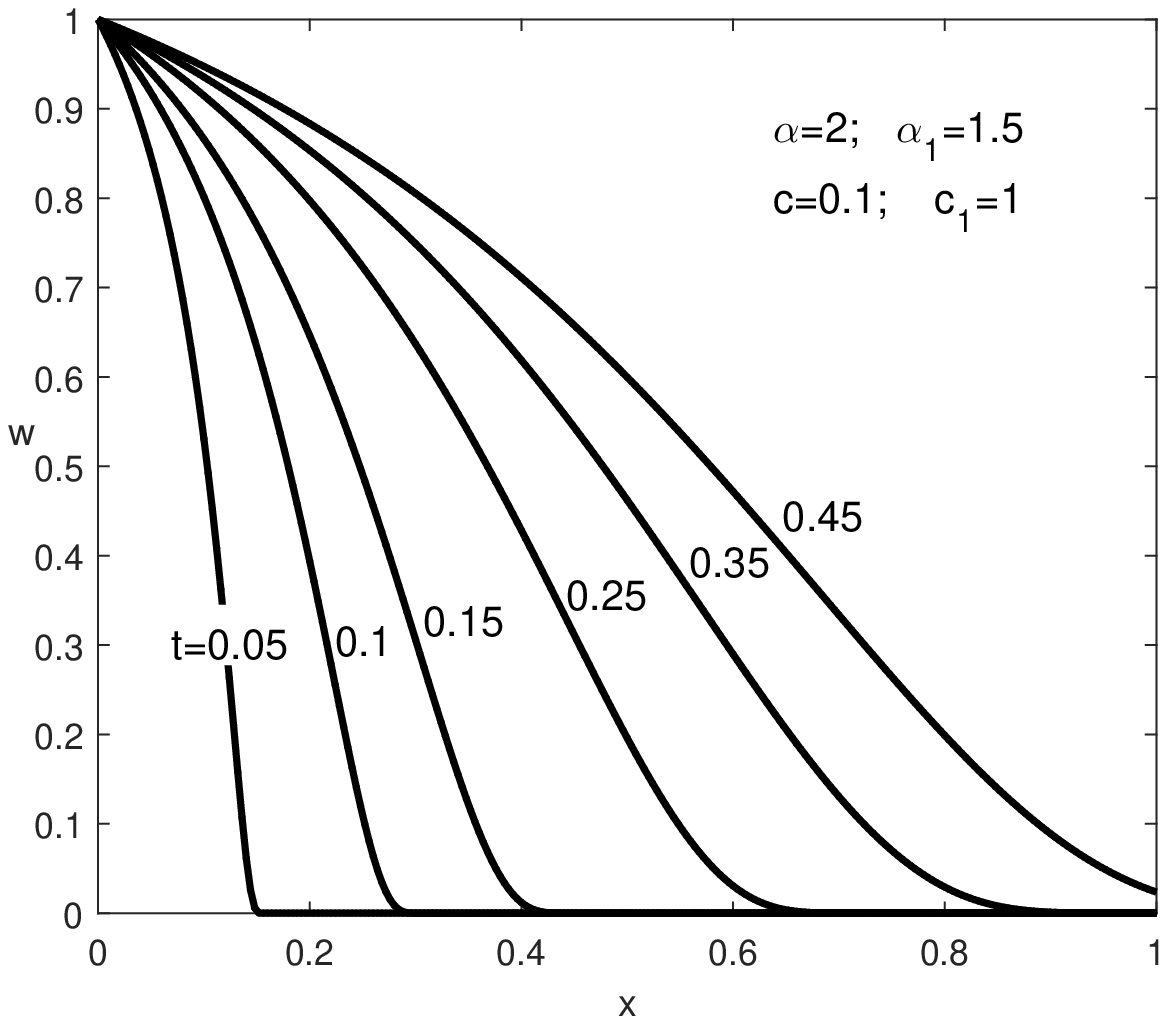}
	\caption{Propagation function $w(x,t)$ for a two-term equation (\ref{A}) with $\al=2$, $\al_1=1.5$, as a function of $x$ 
	for different values of $t$.
	}	
	\end{figure}

	\begin{figure}[h]
		\includegraphics[width=0.8\textwidth]{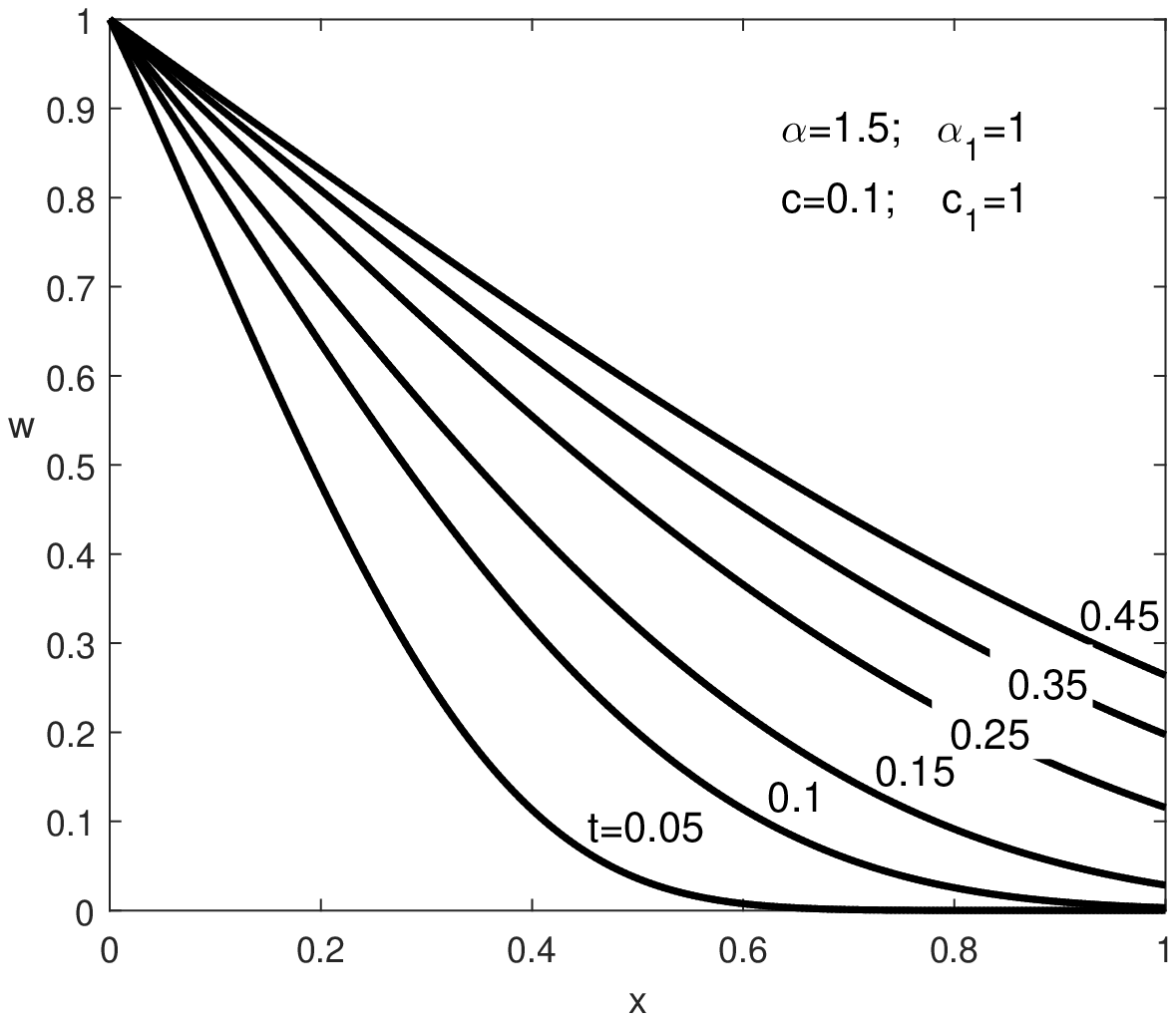}
	\caption{Propagation function $w(x,t)$ for a two-term equation (\ref{A}) with $\al=1.5$, $\al_1=1$, as a function of $x$ 
	for different values of $t$.
	}	
	\end{figure}

\begin{figure}[h]
		\includegraphics[width=0.8\textwidth]{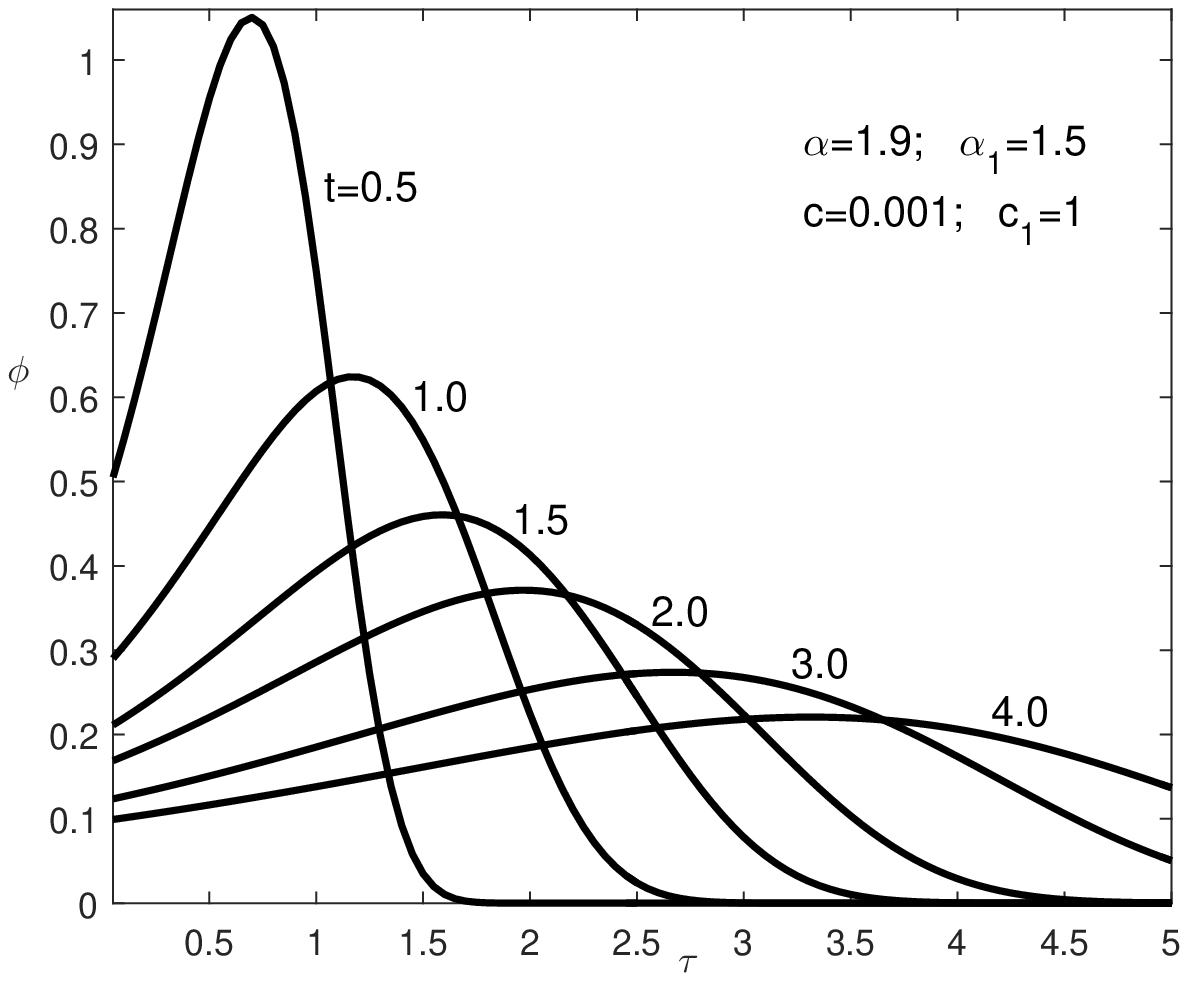}
	\caption{Probability density function $\ph(t,\tau)$ for a two-term equation (\ref{A})  with $\al=1.9$, $\al_1=1.5$, as a function of $\tau$ for different values of $t$.  }	
	\end{figure}

\begin{figure}[h]
		\includegraphics[width=0.8\textwidth]{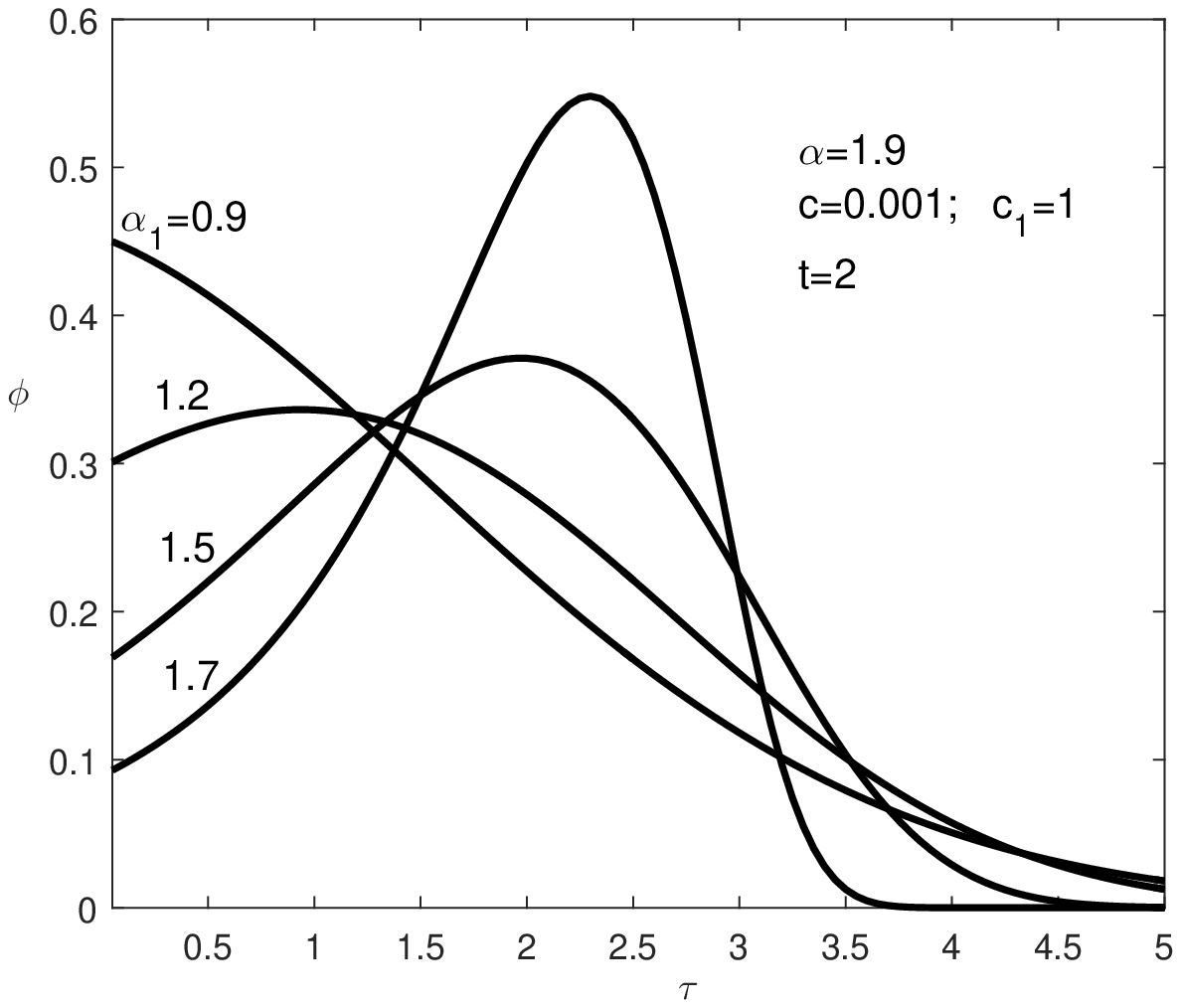}
	\caption{Probability density function $\ph(t,\tau)$ for a two-term equation (\ref{A}) with $\al=1.9$ as a function of $\tau$ for $t=2$ and different values of $\al_1$.  }	
	\end{figure}
	
	\begin{figure}[h]
		\includegraphics[width=0.8\textwidth]{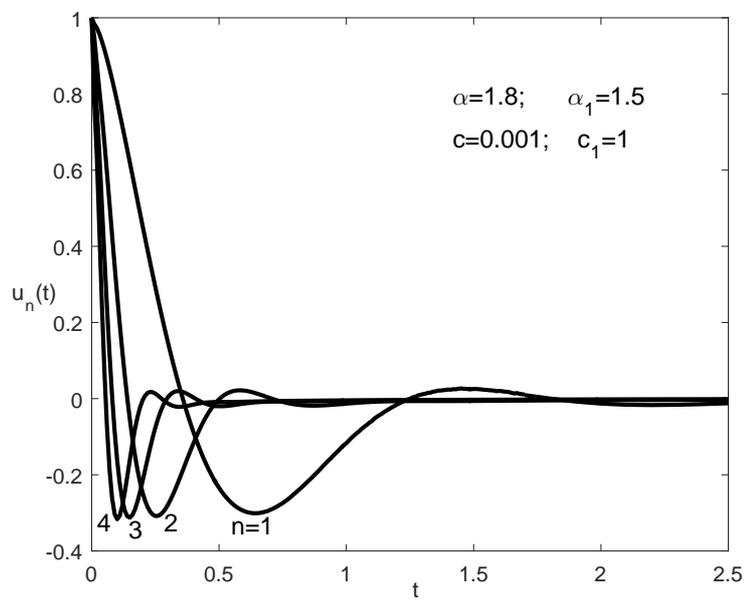}
	\caption{Eigenmodes $u_n(t)$, $n=1,2,3,4,$ for the two-term one-dimensional equation with $\al=1.8$, $\al_1=1.5$.  }	
	\end{figure}

\end{document}